\newtheorem*{theorem*}{Theorem}
\newtheorem{lem}{Lemma}
\newtheorem{prop}{Proposition}
\newtheorem{corollary}{Corollary}
\theoremstyle{definition}
\newtheorem{dfn}{Definition}
\newtheorem{remark}{Remark}
\numberwithin{equation}{section}
\newcommand{\R}{\ensuremath{\mathbf{R}}}
\newcommand{\defn}{\ensuremath{\overset{\mathrm{def}}{=}}}
\newcommand{\dd}{\ensuremath{\mathrm{d}}}
\begin{document}
\title{\bf{A Novel Method of Solution For \\ The Fluid Loaded Plate}}
\author{A.C.L Ashton\footnote{a.c.l.ashton@damtp.cam.ac.uk}\, and A.S Fokas\footnote{t.fokas@damtp.cam.ac.uk}\\
\\
Department of Applied Mathematics \\
and Theoretical Physics\\
University of Cambridge \\
CB3 0WA, UK}
\maketitle
\begin{abstract}
We study the equations governing a fluid loaded plate. We first reformulate these equations as a system of two equations, one of which is an explicit non-local equation for the wave height and the velocity potential on the free surface. We then concentrate on the linearised equations and show that the problems formulated either on the full or the half line can be solved by employing the unified approach to boundary value problems introduced by on of the authors in the late 1990's. The problem on the full line was analysed by Crighton et. al. using a combination of Laplace and Fourier transforms. The new approach avoids the technical difficulty of the a priori assumption that the amplitude of the plate is in $L^1_{\dd t}(\mathbf{R}^+)$ and furthermore yields a simpler solution representation which immediately implies that the problem is well-posed. For the problem on the half-line, a similar analysis yields a solution representation, which however, involves two unknown functions. The main difficulty with the half-line problem is the characterisation of these two functions. By employing the so-called global relation, we show that the two functions can be obtained via the solution of a complex valued integral equation of the convolution type. This equation can be solved in closed form using the Laplace transform. By prescribing the initial data $\eta_0$ to be in $H^3_{\langle 3\rangle}(\mathbf{R}^+)$, or equivalently twice differentiable with sufficient decay at infinite, we show that the solution depends continuously on the initial data, and hence, the problem is well-posed.
\end{abstract}

\newpage
\section{Introduction}
\subsection{The Governing Equations}
The problem we study concerns the motion of a (semi) infinite elastic plate lying in the plane $y=0$, driven from below by a uniform flow with velocity $U$ in the $x$-direction. We denote the amplitude of the plate by $\eta (x,t)$, and use $\phi(x,y,t)$ to denote the potential function for the perturbation from the mean flow. The surface of the plate is described by $\mathcal{S}_\eta$:
\[ \mathcal{S}_\eta = \{ (x,y) \in \mathbf{R}^2: y=\eta(x,t)\}. \]
We use $N_\mathcal{S}=(-\eta_x,1)$ to denote the upward normal to $\mathcal{S}_\eta$ and $\Omega_\eta$ to denote the region occupied by the fluid in $y<\eta$. In the case of the half-line problem, we have $\eta\equiv 0$ in $x\leq 0$. The evolution of the plate is governed by the beam equation and the kinematic boundary condition:
\begin{alignat}{2} 
\eta_{tt} + \eta_{xxxx} &= p&\qquad &\textrm{on $\mathcal{S}_\eta$,} \label{beam}\\
(U+\phi_x, \phi_y)\cdot N_\mathcal{S} &= \eta_t&\qquad &\textrm{on $\mathcal{S}_\eta$,} \label{dynamic}
\end{alignat}
where $p=p(x,y,t)$ denotes the pressure of the fluid in $\Omega_\eta$. In addition, the Bernoulli condition is valid on the plate:
\begin{equation} \phi_t + U\phi_x + \tfrac{1}{2}\phi_x^2 + \tfrac{1}{2}\phi_y^2 + p = 0 \qquad \textrm{on $\mathcal{S}_\eta$}. \label{Bernoulli}\end{equation}
We assume the fluid is incompressible so that the potential is harmonic in $\Omega_\eta$:
\begin{equation} \phi_{xx} + \phi_{yy} = 0 \quad \textrm{in $\Omega_\eta$}. \label{incompressible}\end{equation}
We can eliminate the pressure term using \eqref{beam} and \eqref{Bernoulli}, so the governing equations for the amplitude $\eta$ and the potential $\phi$ become:
\begin{subequations}\label{nonlinear}
\begin{alignat}{2}
\phi_{xx} + \phi_{yy} &= 0&\qquad &\textrm{in $\Omega_\eta$,} \label{incompressible-} \\
\eta_{tt} + \eta_{xxxx}  +\phi_t +U\phi_x + \tfrac{1}{2}\phi_x^2 + \tfrac{1}{2}\phi_y^2&=0&\qquad &\textrm{on $\mathcal{S}_\eta$,} \label{beam-}\\
(U+\phi_x, \phi_y)\cdot N_\mathcal{S} &= \eta_t&\qquad &\textrm{on $\mathcal{S}_\eta$.} \label{dynamic-}
\end{alignat}
\end{subequations}
In addition we assume $|\nabla \phi| \rightarrow 0$ on $\partial\Omega_\eta \setminus \mathcal{S}_\eta$.

Let $\varphi$ denote the potential evaluated on the free surface, i.e,
\begin{equation} \varphi(x,t)=\phi(x,\eta(x,t),t). \label{newcoord}\end{equation}
An application of the chain rule gives
\begin{subequations}\label{chain}
\begin{align} \varphi_x &= \phi_x + \eta_x \phi_y, \label{chain1}\\
 \varphi_t &= \phi_t + \eta_t \phi_y. \label{chain2}
\end{align}
\end{subequations}
Equations \eqref{dynamic-} and \eqref{chain} constitute a non-singular set of equations for $\{\phi_x,\phi_y,\phi_t\}$ on the surface $\mathcal{S}_\eta$. Solving these equations we find:
\begin{subequations}\label{inversion}
\begin{align}
(1+\eta_x^2)\phi_x &= \varphi_x - \eta_x (\eta_t+U\eta_x), \\
(1+\eta_x^2)\phi_y &= \varphi_x \eta_x + \eta_t + U\eta_x, \\
(1+\eta_x^2)\phi_t &= (1+\eta_x^2) \varphi_t - \eta_t (\eta_t + U\eta_x + \eta_x \varphi_x) .
\end{align}
\end{subequations}
We can now write \eqref{beam-} in terms of $(\eta,\varphi)$:
\begin{equation} \eta_{tt} + \eta_{xxxx} +\varphi_t - \tfrac{1}{2}\eta_t^2 -\tfrac{1}{2}U^2 + \frac{(U+\varphi_x - \eta_x\eta_t)^2}{2(1+\eta_x^2)} =0. \end{equation}
In what follows, we reduce equations \eqref{incompressible-} and \eqref{dynamic-} into \emph{one} non-local equation in the coordinates $(\eta,\varphi)$.

\subsection{The Non-Local Formulation}
One of the key ingredients in our approach for the analysis of both the nonlinear and linear problems is the employment of the so-called global relation \cite{fokas2008uab}. The global relation is a direct consequence of the fact that the Laplace equation is equivalent to the following equation:
\begin{equation} \partial_x \Big( e^{-i kx + k y} (  \phi_x -i\phi_y )\Big) + \partial_y\Big( e^{-i kx + k y}( \phi_y+i\phi_x )\Big)  = 0 \qquad \textrm{in $\Omega_\eta$.}\label{totdiv}\end{equation}
Integrating \eqref{totdiv} over $\Omega_\eta$ and employing the divergence theorem gives:
\begin{multline} \int_{\mathcal{S}_\eta} e^{-ikx+ky}(\phi_x -i\phi_y, \phi_y+i\phi_x)\cdot N_{\mathcal{S}}\, \dd x \\
+ \lim_{h\rightarrow \infty} e^{-kh} \int_{y=-h} e^{-ikx}(\phi_x -i\phi_y, \phi_y+i\phi_x)\cdot N_h \, \dd x =0, \label{h1}
\end{multline}
where $N_h=(0,-1)$. By making the restriction $k>0$, the second integral in \eqref{h1} vanishes. Using the kinematic boundary condition \eqref{dynamic-} and equation \eqref{chain1}, the integral equation \eqref{h1} becomes:
\begin{equation} \int_\mathbf{R} e^{-ikx+k\eta} \Big( \eta_t + U\eta_x + i\varphi_x \Big)\dd x =0, \qquad k>0.\end{equation}
We thus have obtained the following result.
\begin{prop}
The solution to the boundary value problem specified in \eqref{nonlinear} is determined by the pair of real valued functions $\eta(x,t),\varphi(x,t)$ which satisfy:
\begin{subequations} \label{total}
\begin{align}
\int_\mathbf{R} e^{-ikx+k\eta} \Big( \eta_t + U\eta_x + i\varphi_x \Big)\dd x &=0, \qquad k>0, \\
\eta_{tt} + \eta_{xxxx} +\varphi_t - \tfrac{1}{2}\eta_t^2 -\tfrac{1}{2}U^2 + \frac{(U+\varphi_x - \eta_x\eta_t)^2}{2(1+\eta_x^2)} &=0,
\end{align}
\end{subequations}
where $\varphi = \phi(x,\eta(x,t),t)$ is the potential on the surface $\mathcal{S}_\eta$ and $\eta(x,t)$ is the amplitude of the plate.
\end{prop}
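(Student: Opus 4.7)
The plan is to verify that the pair of equations in \eqref{total} is equivalent to the system \eqref{nonlinear}, with the second equation already derived explicitly in the excerpt using \eqref{inversion}. Thus the only new content is establishing the nonlocal integral equation, which encodes Laplace's equation \eqref{incompressible-} together with the kinematic condition \eqref{dynamic-} and the far-field decay $|\nabla\phi|\to 0$.

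First I would verify the pointwise identity \eqref{totdiv}: expanding the derivatives produces $e^{-ikx+ky}$ times $(\phi_{xx}+\phi_{yy})$ plus cross terms which cancel in pairs because of the specific choice of the exponential factor (the $ik$ from $\partial_x e^{-ikx+ky}$ cancels against the $k$ from $\partial_y e^{-ikx+ky}$ when paired with $\phi_x$ and $\phi_y$ respectively). Hence \eqref{totdiv} is equivalent to $\Delta\phi=0$ in $\Omega_\eta$. Next I would integrate \eqref{totdiv} over $\Omega_\eta$ truncated to $y>-h$, and apply the divergence theorem. The boundary consists of $\mathcal{S}_\eta$ and the line $y=-h$; the contributions at $x=\pm\infty$ are killed by the decay hypothesis on $\nabla\phi$ together with, on the relevant side, the exponential $e^{ky}$ for $k>0$. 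Letting $h\to\infty$, the factor $e^{-kh}$ forces the bottom integral to vanish, yielding \eqref{h1}.

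Parametrising $\mathcal{S}_\eta$ by $x$, I would write $N_\mathcal{S}\,\dd\sigma=(-\eta_x,1)\,\dd x$ and expand the integrand of the surface contribution in \eqref{h1}: the dot product gives
\[ -\eta_x(\phi_x-i\phi_y)+(\phi_y+i\phi_x) = (\phi_y-\eta_x\phi_x)+i(\phi_x+\eta_x\phi_y). \]
The real part, after adding the trivial identity coming from $(U,0)\cdot N_\mathcal{S}=-U\eta_x$, is identified with $\eta_t+U\eta_x$ by the kinematic boundary condition \eqref{dynamic-}, while the imaginary part is exactly $\varphi_x$ by the chain rule \eqref{chain1}. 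This produces the desired nonlocal equation. Conversely, given real $(\eta,\varphi)$ satisfying \eqref{total}, one recovers $\phi$ in $\Omega_\eta$ by solving the Dirichlet (or Neumann) problem for Laplace's equation with boundary data reconstructed from $\varphi$ on $\mathcal{S}_\eta$ and decay at infinity; the global relation ensures consistency with the kinematic condition. The second equation of \eqref{total} is the already derived form of \eqref{beam-}, so nothing further is needed there.

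The main technical obstacle is the justification of the divergence theorem in the unbounded domain $\Omega_\eta$, specifically showing that the lateral contributions at $x=\pm\infty$ vanish and that the bottom integral decays as $h\to\infty$. This requires quantitative decay of $\phi$ and its first derivatives, which is only assumed in the mild form $|\nabla\phi|\to 0$ on $\partial\Omega_\eta\setminus\mathcal{S}_\eta$; one needs a sufficient rate (e.g.\ integrability in $x$) for the exponential weight $e^{-ikx+ky}$ with $k>0$ to tame the integrands. A secondary but conceptually important subtlety is that the global relation must hold for \emph{all} $k>0$ simultaneously, so that it encodes Laplace's equation rather than merely a single moment condition — this is what makes the reduction equivalent rather than lossy.
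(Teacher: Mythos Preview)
Your proposal is correct and follows essentially the same route as the paper: verify the divergence identity \eqref{totdiv}, integrate over the truncated domain and pass to the limit for $k>0$, then identify the real and imaginary parts of the surface integrand via the kinematic condition \eqref{dynamic-} and the chain rule \eqref{chain1}. If anything, you go further than the paper by discussing the converse direction and the decay hypotheses needed to justify the divergence theorem in the unbounded domain; the paper treats the derivation at a formal level and does not address these points.
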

From this stage onwards we restrict attention to the linearised problem. For the analysis of the nonlinear problem for \emph{a collection} of fluid loaded membranes, we refer the reader to \cite{antnlflp}.

Dropping higher order terms, equations \eqref{total} become:
\begin{subequations}\label{linear}
\begin{align}
\hat{\eta}_t + ikU \hat{\eta} -k\hat{\varphi} &=0, \qquad k>0, \\
\eta_{tt} + \eta_{xxxx} + \varphi_t + U\varphi_x &=0,
\end{align}
\end{subequations}
where the hat denotes the usual Fourier transform.

\subsection{The Cauchy Problems}
We will study the initial-boundary value problem corresponding to the equations \eqref{linear} on both the full and half line. We first work on a formal level to derive explicit solutions to the underlying problem, \emph{then} prove what properties these solutions have. Frequently we will refer to the standard Sobolev space $H^s(X)$, where $X$ is either the real or full line and $H^s(X)=W^{s,2}(X)$:
\[ W^{k,p}(\Omega) \defn \{ \partial^\alpha f\in  L^p(\Omega), 0\leq |\alpha |\leq k \}.\] 
Taking into consideration that the plate is assumed to be thin, it is not obvious that there exists a solution with $\eta \in L^2_{\dd x}(X)$, since disturbances in the plate could propagate at arbitrarily high speeds. In other words, even if $\eta(x,0) \in C^\infty_c (X)$, it does not immediately follow that $\eta \in L^2_{\dd x}(X)$. However, as we shall see, the problem with such a restrictive function class \emph{does} admit a solution. Indeed, it will become apparent that rapid oscillations facilitate the precribed integrability conditions.

\subsubsection{The Full Line}
The governing equations corresponding to the Cauchy problem for the fluid loaded plate on the full line are given by:
\begin{subequations}\label{full_line_eq}
\begin{alignat}{2}
\eta_{tt} + \eta_{xxxx}  + \varphi_t + U\varphi_x &=0 &\quad   & x \in \R,\,\, 0<t<T,  \label{whole1} \\
\hat{\eta}_t + ikU \hat{\eta} -k\hat{\varphi} &=0, && 0<k<\infty, \,\, 0<t<T, \label{wholeFour} \\
\eta (x,0) &= \eta_0 (x), && x \in \R, \label{whole2} \\
\varphi_t (x,0) + U \varphi_x (x,0)&=0,  && x\in\R , \label{whole3}
\end{alignat}
\end{subequations}
where $\eta_0 \in H^2_{\dd x}(\R)$ is the initial profile of the plate.

\subsubsection{The Semi-Infinite Line}
In the case of the semi-infinite line, we impose extra conditions at the hinge which govern the curvature of the plate:
\begin{subequations}\label{half_line_eq}
\begin{alignat}{2}
\eta_{tt} + \eta_{xxxx}+ \varphi_t  + U\varphi_x &=0, &\quad & 0<x<\infty,\,\, 0<t<T, \label{half1}  \\
\hat{\eta}_t + ikU \hat{\eta} -k\hat{\varphi} &=0, && 0<k<\infty, \,\, 0<t<T, \label{halfFour} \\
\eta (x,0) &= \eta_0 (x), && 0<x<\infty, \label{initial} \\
\varphi_t (x,0) + U \varphi_x (x,0) &=0, && 0<x<\infty, \label{half3}\\
\eta (0,t) = \eta_x (0,t) &= 0, && 0<t<T, 
\end{alignat}
\end{subequations}
where the hat denotes the Fourier transform \emph{on the full line}. In this case we assume $\eta_0\in H^2_{\dd x}(\R^+)$ as well as $\eta_0 (0)=\eta_0 '(0)=\eta_0 ''(0)=0$. We denote the corresponding function space $H^2_{\langle 2\rangle}(\R^+) \subset H^2_{\dd x}(\R^+)$ whose elements have zero derivatives at the hinge up to and including the second derivative. Given $f\in H^s_{\langle s\rangle}(\mathbf{R}^+)$, note that the extension operator:
\[ E: H^s_{\langle s\rangle}(\mathbf{R}^+) \rightarrow H^s_{\dd x}(\mathbf{R}) : f\mapsto  E f = \begin{cases} f,& x>0 \\
0,& x\leq 0\end{cases} \]
is continuous, i.e. $\| Ef\|_{H^s(\mathbf{R})} \leq C_{s}\|f\|_{H^s_{\langle s\rangle}(\mathbf{R}^+)}$, hence we may regard $\eta_0 \in H^2_{\dd x}(\mathbf{R})$ with $\mathrm{supp} \,\eta \subset \mathbf{R}^+$.

Equations \eqref{full_line_eq} are solved in section \S2. By employing the unified approach for analysing initial-boundary value problems introduced by one of the authors \cite{fokas2008uab} we find expressions for $\hat{\eta}$ and $\hat{\varphi}$. The issue of well-posedness can then be addressed: we demonstrate that the solution to the IBVP in \eqref{full_line_eq} depends continuously on the initial data, with respect to the $L^2_{\dd x}(\R)$ topology, and so the problem is well-posed in the Hadamard sense.

In the third and fourth sections, we concentrate on the problem on the half-line described by equations \eqref{half_line_eq}. The analysis is similar, but there are two main differences: (a) Using an analytic continuation argument, we show that $k$, instead of being restricted to be positive, satisfies the less sturgent condition that it lies within the fourth quadrant of the complex plane. (b) The solution representation involves the two \emph{unknown} functions $\eta_{xx}(0,t)$ and $\eta_{xxx}(0,t)$.

By utilising the extra freedom in $k$ (see (a) above) and employing the approach of \cite{fokas2008uab} we can determine the two unknown functions. Using an appropriate function space we show that the two unknown functions can be determined \emph{uniquely} in terms of the given data. In contrast to the problem on the full line, we find that for $\eta_0 \in H^\sigma_{\langle\sigma\rangle}(\mathbf{R}^+)$, $\sigma <3$, the solution \emph{does not} depend continuously on the initial data, and in this sense, is ill-posed in $L^2_{\dd x}(\R^+)$; however, for $\eta \in H^3_{\langle 3\rangle}(\mathbf{R}^+)$, the problem is well-posed.

\section{The Infinite line}
We use the Fourier transform pair:
\begin{equation}
\eta (x,t) = \frac{1}{\sqrt{2\pi}} \int_{\R} e^{ikx} \hat{\eta}(k,t)\, \mathrm dk, \quad \hat{\eta}(k,t) = \frac{1}{\sqrt{2\pi}}\int_\R e^{-ikx}\eta(x,t)\, \mathrm dx,\label{fouriereta}
\end{equation}
where the integrals are understood in the Lebesgue sense and as such, equalities that follow from inversion theorems are to be understood almost everywhere. Our assumptions about $\eta(x,t)$ implies that $\hat{\eta}$ is well-defined. Similarly, we define the Fourier transform pair for the potential $\varphi (x,t)$:
\begin{equation}
\varphi (x,t) = \frac{1}{\sqrt{2\pi}} \int_\R e^{ikx} \hat{\varphi}(k,t)\, \mathrm dk, \quad \hat{\varphi}(k,t) = \frac{1}{\sqrt{2\pi}}\int_\R e^{-ikx}\varphi (x,t)\, \mathrm dx.\label{fourierphi}
\end{equation}
The reality of $\eta$ implies that the knowledge of $\hat{\eta}$ for $k>0$ is sufficient for the reconstruction of $\eta$:
\begin{equation}\int_\R e^{ikx}\hat{\eta}(k,t)\, \mathrm dk = \int_{\R^+}\left( e^{ikx}\hat{\eta}(k,t) + e^{-ikx} \overline{\hat{\eta}(k,t)} \right) \, \mathrm dk. \label{realeta}\end{equation}
The following proposition provides an equivalent initial-value problem to \eqref{full_line_eq} for $\hat{\eta}$, from which we can reconstruct $\varphi(x,t)$ and $\eta(x,t)$.
\begin{prop}\label{ibvpwholeprop}
The initial-boundary value problem stated in \eqref{full_line_eq} is equivalent to the following initial value problem in the spectral (Fourier) space:
\begin{subequations}\label{easywhole}
\begin{alignat}{2} \left(1+\tfrac{1}{k}\right)\hat{\eta}_{tt} + 2iU\hat{\eta}_t + \left( k^4 - U^2k\right) \hat{\eta} &= 0, &\quad &k> 0,\,\, 0<t<T, \\ 
\tfrac{1}{k}\hat{\eta}_{tt}(k,0) + 2iU \hat{\eta}_t(k,0) - kU^2 \hat{\eta}(k,0) &= 0,& \quad & k> 0, \\
\hat{\eta}(k,0) &= \hat{\eta}_0 (k),& \quad & k> 0. \end{alignat}
\end{subequations}
\end{prop}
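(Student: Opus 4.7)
The proof is a direct manipulation in Fourier space; the only subtle point is the role of the half--line spectral variable. First, I would take the spatial Fourier transform of the beam equation \eqref{whole1} to obtain
\[
\hat\eta_{tt} + k^4\hat\eta + \hat\varphi_t + ikU\hat\varphi = 0, \qquad k\in\mathbf{R},\ 0<t<T.
\]
For $k>0$ the global relation \eqref{wholeFour} may be solved for the potential, giving
\[
\hat\varphi = \tfrac{1}{k}\bigl(\hat\eta_t + ikU\hat\eta\bigr), \qquad \hat\varphi_t = \tfrac{1}{k}\bigl(\hat\eta_{tt} + ikU\hat\eta_t\bigr).
\]
Substituting these into the Fourier-transformed beam equation and collecting like terms produces
\[
\bigl(1+\tfrac{1}{k}\bigr)\hat\eta_{tt} + 2iU\hat\eta_t + \bigl(k^4 - kU^2\bigr)\hat\eta = 0, \qquad k>0,
\]
which is the evolution equation of \eqref{easywhole}.

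Next I would derive the compatibility condition at $t=0$. Taking the Fourier transform of \eqref{whole3} yields $\hat\varphi_t(k,0)+ikU\hat\varphi(k,0)=0$ for all $k\in\mathbf{R}$. Restricting to $k>0$ and inserting the expressions for $\hat\varphi$ and $\hat\varphi_t$ computed above, evaluated at $t=0$, gives after simplification
\[
\tfrac{1}{k}\hat\eta_{tt}(k,0) + 2iU\hat\eta_t(k,0) - kU^2\hat\eta(k,0) = 0,
\]
which is the required initial constraint, together with $\hat\eta(k,0)=\hat\eta_0(k)$ from \eqref{whole2}.

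For the converse, suppose $\hat\eta(k,t)$ solves \eqref{easywhole} on $k>0$. I would define $\hat\varphi(k,t)=\tfrac{1}{k}(\hat\eta_t+ikU\hat\eta)$ for $k>0$, extend both $\hat\eta$ and $\hat\varphi$ to $k<0$ using the reality condition \eqref{realeta}, and invert the Fourier transform to recover $\eta,\varphi$. By construction \eqref{wholeFour} holds and $\eta(x,0)=\eta_0(x)$; multiplying the ODE by $k/(k+1)$ and reversing the above substitution recovers the beam equation \eqref{whole1}, while the initial compatibility condition translates back into \eqref{whole3}. The only place I would need to be careful is the factor $1/k$ at $k=0^+$: since the global relation is only posited on $k>0$ and $\eta$ is reconstructed from its positive-$k$ Fourier data via \eqref{realeta}, the point $k=0$ is of measure zero and plays no role, so the equivalence is complete. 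This final bookkeeping about the positive-$k$ half-line versus the full Fourier line is the main (minor) obstacle; the algebraic reduction itself is routine.
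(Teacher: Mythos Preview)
Your proposal is correct and follows essentially the same route the paper intends: the paper does not prove this proposition directly but refers to the proof of Proposition~\ref{propo}, whose argument (Fourier transform the beam equation, substitute $\hat\varphi=\tfrac{1}{k}\hat\eta_t+iU\hat\eta$ and its time derivative, then do the same for the initial condition on $\varphi$) is precisely what you carry out, with the simplification that on the full line no boundary terms appear after integration by parts. Your explicit treatment of the converse direction and the remark on $k=0^+$ go slightly beyond what the paper records, but are consistent with it.
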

For the sake of brevity, we refer the reader to the proof of a similar result in proposition \ref{propo}. We next introduce some useful notation that shall be used throughout.
\begin{dfn}\label{defs}
The functions $\{ c_+ (k), c_- (k), \alpha (k)\}$ are defined as follows:
\begin{equation}c_\pm (k) \defn \pm\left( \frac{\omega_\mp ^2 -k^4}{\omega_-^2 - \omega_+^2}\right)\hat{\eta}_0(k), \qquad \alpha (k) \defn \frac{1}{i(\omega_- -\omega_+)}\label{cpm}\end{equation}
where $\omega_\pm$ are defined as the roots of the dispersion relation $D(\omega,k) = 0$:
\begin{equation}D(k,\omega) \defn  -\left(1+\tfrac{1}{k}\right)\omega^2 + 2U\omega- \left( k^4 - U^2 k\right). \end{equation}
The roots are given explicitly by
\begin{equation} \omega_\pm = \frac{U \pm k^2 \left(
    1+\tfrac{1}{k}-\tfrac{U^2}{k^3}\right)^{1/2}}{\left(1+\frac{1}{k}\right)}. \label{omegapm}\end{equation}
\end{dfn}
Employing this notation, we construct the solution to \eqref{easywhole} and hence \eqref{full_line_eq}.
\begin{prop}
The solution to the IBVP posed in \eqref{full_line_eq} is given by
\begin{align*}
\eta(x,t) &= \frac{1}{\sqrt{2\pi}} \int_{\R^+} \left( e^{ikx}\hat{\eta}(k,t) + e^{-ikx} \overline{\hat{\eta}(k,t)} \right) \, \mathrm dk, \\
\varphi (x,t) &= \frac{1}{\sqrt{2\pi}} \int_{\R^+} \left( e^{ikx}\hat{\varphi}(k,t) + e^{-ikx} \overline{\hat{\varphi}(k,t)} \right) \, \mathrm dk.
\end{align*}
The function $\hat{\eta}(k,t)$ is defined by
\begin{equation} \hat{\eta}(k,t) = c_+ (k) e^{-i\omega_+ (k) t} + c_- (k) e^{-i\omega_- (k)t},\qquad k>0, \label{solnwhole} \end{equation}
where $c_\pm$ and $\omega_\pm$ are defined in \eqref{cpm} and \eqref{omegapm}, and $\hat{\varphi}(k,t)$ defined by \eqref{wholeFour}.
\end{prop}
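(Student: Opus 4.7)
The plan is to invoke Proposition \ref{ibvpwholeprop} to convert \eqref{full_line_eq} into a scalar second order linear ODE in $t$ for $\hat\eta(k,\cdot)$, solve that ODE in closed form on $k>0$, recover $\hat\varphi$ from the Fourier identity \eqref{wholeFour}, and exploit the reality of $\eta$ and $\varphi$ via \eqref{realeta} to collapse the Fourier inversion to an integral over $\R^+$.

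For each fixed $k>0$ the governing ODE in \eqref{easywhole} has constant (in $t$) coefficients, so the ansatz $\hat\eta\propto e^{-i\omega t}$ reduces it to the algebraic relation $D(k,\omega)=0$ of Definition \ref{defs}, whose two roots are the $\omega_\pm(k)$ displayed in \eqref{omegapm}. Consequently the general solution of the ODE is
\begin{equation*}
\hat\eta(k,t)=A_+(k)e^{-i\omega_+(k)t}+A_-(k)e^{-i\omega_-(k)t},
\end{equation*}
and the task reduces to identifying $A_\pm$ with the prescribed $c_\pm$.

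To pin down $A_\pm$ I impose the two initial conditions in \eqref{easywhole}. The first, $\hat\eta(k,0)=\hat\eta_0(k)$, gives $A_++A_-=\hat\eta_0(k)$. Evaluating the ODE at $t=0$ and subtracting the second initial condition cancels the $\hat\eta_t$ contributions and leaves $\hat\eta_{tt}(k,0)=-k^4\hat\eta_0(k)$, equivalently $\omega_+^2A_++\omega_-^2A_-=k^4\hat\eta_0(k)$. The coefficient matrix has determinant $\omega_-^2-\omega_+^2\neq 0$, so Cramer's rule yields
\begin{equation*}
A_\pm(k)=\pm\,\frac{\omega_\mp^2-k^4}{\omega_-^2-\omega_+^2}\,\hat\eta_0(k)=c_\pm(k),
\end{equation*}
exactly as in Definition \ref{defs}. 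With $\hat\eta$ in hand, \eqref{wholeFour} algebraically fixes $\hat\varphi(k,t)=k^{-1}(\hat\eta_t+ikU\hat\eta)$ for $k>0$, and the reality of $\eta$ and $\varphi$ together with \eqref{realeta} produces the stated representation of $\eta(x,t)$ and $\varphi(x,t)$ as integrals over $\R^+$.

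The only obstacle worth flagging is the possible coincidence $\omega_+(k)=\omega_-(k)$, which occurs precisely on the zero set of $1+\tfrac{1}{k}-\tfrac{U^2}{k^3}$; at such $k$ the two exponentials degenerate and the representation must be read in the reduction-of-order sense, but since this exceptional set has measure zero in $k$ it contributes nothing to the integrals defining $\eta$ and $\varphi$. Questions of convergence of the inversion integrals and of continuous dependence of the solution on $\eta_0\in H^2_{\dd x}(\R)$ lie outside the present statement and are addressed in the well-posedness analysis that follows.
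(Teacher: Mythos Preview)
Your proof is correct and follows essentially the same approach as the paper: both reduce the problem to the spectral ODE of Proposition~\ref{ibvpwholeprop} and then invoke the dispersion relation $D(k,\omega)=0$. The only difference is expository---the paper's proof is a one-line verification (``check that \eqref{solnwhole} satisfies \eqref{easywhole} by substitution''), whereas you carry out the constructive derivation of $A_\pm=c_\pm$ via the $2\times 2$ linear system, and you additionally flag the measure-zero degeneracy $\omega_+=\omega_-$, which the paper leaves implicit.
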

\begin{proof}
All that is needed is to show that $\hat{\eta}(k,t)$ satisfies the IBVP in proposition \ref{ibvpwholeprop}. This follows routinely by using the definitions and the fact that $\omega_{\pm}(k)$ satisfy the dispersion relation given in definition \eqref{fouriereta}.
\end{proof}
In what follows we discuss the well-posedness of the IBVP \eqref{full_line_eq}. The previous work establishes existence; uniqueness follows from proposition \ref{ibvpwholeprop} using standard uniqueness results for ODEs and the usual results from Fourier analysis on $L^2_{\dd x}(\R)$. The continuous dependence of the solution on the initial data is proven in the next proposition.
\begin{prop}\label{firstwellposed}
The solution to the Cauchy problem \eqref{full_line_eq} is well-posed. The map $S_t:H^2_{\dd x}(\mathbf{R})\rightarrow L^2_{\dd x}(\mathbf{R})$ defined by $\eta_0 \mapsto \eta$ is continuous.
\end{prop}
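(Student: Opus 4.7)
Since $S_t$ is linear, continuity reduces to proving the bound
\[
\|\eta(\cdot,t)\|_{L^2(\mathbf{R})} \leq C(t)\,\|\eta_0\|_{H^2(\mathbf{R})}.
\]
The explicit representation \eqref{solnwhole} takes the Fourier-multiplier form $\hat{\eta}(k,t) = M(k,t)\hat{\eta}_0(k)$ with
\[
M(k,t) = \frac{(\omega_-^2-k^4)e^{-i\omega_+ t} - (\omega_+^2-k^4)e^{-i\omega_- t}}{\omega_-^2-\omega_+^2}.
\]
Invoking Plancherel and the reality condition \eqref{realeta}, we have $\|\eta(\cdot,t)\|_{L^2(\mathbf{R})}^2 = 2\|\hat{\eta}(\cdot,t)\|_{L^2(\mathbf{R}^+)}^2$, so the claim reduces to the pointwise multiplier estimate $|M(k,t)| \leq C(t)(1+k^2)$ for $k>0$.

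The dispersion relation \eqref{omegapm} has a critical wavenumber $k_*>0$ at which $\omega_+(k_*) = \omega_-(k_*)$: the roots are complex conjugates for $0<k<k_*$ and real for $k>k_*$, and the denominator in the definition of $c_\pm$ vanishes at $k_*$. To expose the cancellation, I would write $\omega_\pm = a(k)\pm b(k)$ and use sum-to-product identities to obtain the equivalent form
\[
M(k,t) = e^{-iat}\!\left[\cos(bt) + \frac{i\,(a^2+b^2-k^4)}{2a}\cdot\frac{\sin(bt)}{b}\right].
\]
Since $\sin(bt)/b \to t$ as $b\to 0$, this form makes the removability of the singularity at $k_*$ manifest, the cancellation being dictated by the identity $c_+ + c_- = \hat{\eta}_0$. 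For $0<k<k_*$ we have $b = i\beta$ with $\beta$ real, so $\cos(bt) = \cosh(\beta t)$ and $\sin(bt)/b = \sinh(\beta t)/\beta$; these are uniformly bounded on $(0,k_*]\times[0,T]$ because $\beta$ is bounded there. The apparent singularity at $k=0$ (where $a\to 0$) is likewise removable once one verifies that $(a^2+b^2-k^4)/(2a)\to 0$ in that limit.

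For $k$ large the roots $\omega_\pm$ are real, hence $|e^{-i\omega_\pm t}|=1$, and an asymptotic analysis of \eqref{omegapm} yields $a\to U$, $b\sim k^2$, and $\omega_\mp^2 - k^4 = O(k^3)$, whence $|c_\pm(k)| = O(k)|\hat{\eta}_0(k)|$ and therefore $|M(k,t)| \leq C(1+k^2)$ in this regime. Patching the uniform bound on $[0,K]$ with the asymptotic bound on $[K,\infty)$ (for any $K>k_*$) gives the full multiplier estimate, and the $L^2$ bound follows immediately from Plancherel. The main obstacle is the cancellation at $k_*$: the decisive step is the reorganised formula for $M$ above, which simultaneously regularises the coalescence and exhibits the polynomial growth at infinity compatible with the $H^2$ topology on the initial data.
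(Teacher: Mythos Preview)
Your argument is correct and follows essentially the same route as the paper's proof. Both reduce the question, via Parseval, to the multiplier bound $|M(k,t)|\le C(1+k^2)$ and establish it by rewriting $M$ in $\cos/\sin$ form; your decomposition $\omega_\pm=a\pm b$ is exactly the paper's rearrangement \eqref{rearrang} in different notation (the paper's $kQ/(1+k)$ is your $b$), and your discussion of the removable singularities at $k_*$ and $k=0$ simply spells out what the paper summarises as ``straightforward''.
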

\begin{proof}
It suffices to show that $\eta(x,t)$ depends continuously in $\eta_0(x)$ with respect to the $L^2_{\dd x}(\R)$ topology, which is straightforward: Denoting $c_\pm (k)$ by $\tilde{c}_\pm(k)\hat{\eta}_0(k)$, it follows from \eqref{solnwhole} that
\begin{align}\| \eta (\cdot, t) \|_{L^2_{\dd x}} &= \| \left(\tilde{c}_-e^{-i\omega_- t} + \tilde{c}_+ e^{-i\omega_+ t}\right)\hat{\eta}_0 \|_{L^2_{\dd k}},\nonumber \\
&= \left\|\left(\frac{\tilde{c}_-e^{-i\omega_- t} + \tilde{c}_+ e^{-i\omega_+ t}}{1+|k|^2}\right)(1+|k|^2)\hat{\eta}_0\right\|_{L^2_{\dd k}}  \label{estim}\end{align}
which follows from Parseval's theorem. Definitions \ref{defs} and some algebra yield:
\begin{equation} |\tilde{c}_-e^{-i\omega_- t} + \tilde{c}_+ e^{-i\omega_+ t}|^2 \equiv  \left| \cos \left(\tfrac{kQt}{1+k}\right) + \tfrac{1}{2ik}\left( \tfrac{Q}{U} + \tfrac{k^2U}{Q}\right)\sin\left(\tfrac{kQt}{1+k}\right)\right|^2, \label{rearrang}\end{equation}
where $Q\equiv \sqrt{k(k^3+k^2-U^2)}$. It is straightforward to prove that
\[ \left\|\frac{\tilde{c}_-e^{-i\omega_- t} + \tilde{c}_+ e^{-i\omega_+ t}}{1+|k|^2}\right\|_{L^\infty} \]
is finite, so H\"older's inequality in the estimate \eqref{estim} gives:
\begin{equation} \| \eta (\cdot, t) \|_{L^2_{\dd x}} \leq c_1  \|(1+|k|^2)\hat{\eta}_0\|_{L^2_{\dd k}}.\label{sobest}\end{equation}
Recalling that $u\in H^s_{\dd x}(\mathbf{R})$ if and only if $(1+|k|^2)^{s/2}\hat{u} \in L^2_{\dd k}(\mathbf{R})$, the estimate in \eqref{sobest} becomes:
\begin{equation}
\| \eta (\cdot, t) \|_{L^2_{\dd x}} \leq c_2 \|\eta_0\|_{H^2_{\dd x}(\mathbf{R})}. \label{estfinal}
\end{equation}
The result now follows from \eqref{estfinal} by linearity.
\end{proof}
This concludes the study of the problem on the full line. We note that the solution involves integrals of the form:
\[ \int_{\R^+} f(k)e^{i(kx-\omega t)}\, \dd k, \]
which are of convenient form for the study of the long time asymptotics of the problem. Indeed, the work in \cite{fokas2005aac} establishes stability results for solutions of evolution PDEs on the half line by analysing precisely these types of integrals. In addition, it is a simple task to evaluate numerically the solution \eqref{solnwhole}, see figure \ref{waveplot}.
\begin{figure}\label{waveplot}
\begin{center}
\includegraphics[scale=0.7]{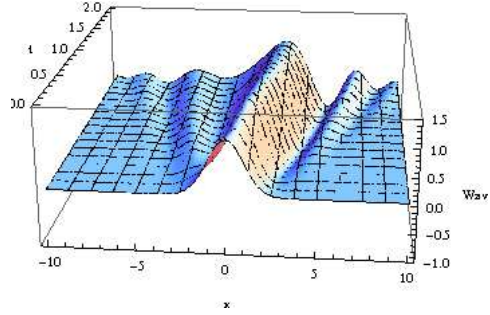}
\caption{Plot of $\eta$ for $(x,t) \in [-10,10]\times [0,2]$ with $\eta_0(x)=\exp \big(\tfrac{-x^2}{2}\big)$.}
\end{center}
\end{figure}

\section{The Semi-Infinite line}
The classical Fourier transform pair on the half-line is
\begin{equation} \eta (x,t) = \frac{1}{\sqrt{2\pi}} \int_\R e^{ikx} \hat{\eta}(k,t)\,\mathrm \mathrm \mathrm dk, \qquad \hat{\eta}(k,t) = \frac{1}{\sqrt{2\pi}}\int_{\R^+} e^{-ikx}\eta(x,t)\, \mathrm dx. \end{equation}
Clearly $\hat{\eta}(k,t)$ is well defined and analytic for $k\in\mathcal{D}_3 \cup \mathcal{D}_4$, where $\mathcal{D}_i$ represents the $i$th quadrant in $\mathbf{C}$. Similarly,
\begin{equation} \varphi (x,t) = \frac{1}{\sqrt{2\pi}} \int_\R  e^{ikx} \hat{\varphi}(k,t)\, \mathrm dk, \qquad \hat{\varphi}(k,t) = \frac{1}{\sqrt{2\pi}}\int_{\R^+} e^{-ikx}\varphi (x,t)\, \mathrm dx. \end{equation}
These definitions will be used extensively in what follows.
\begin{prop}\label{fundk}
Let $\eta(x,t)$ and $\varphi(x,t)$ satisfy \eqref{half_line_eq}. Then the half line Fourier transforms of $\eta(x,t)$ and $\varphi(x,t)$ satisfy the following equation:
\begin{equation} \hat{\varphi} (k,t) = \frac{\hat{\eta}_t (k,t)}{k} + iU \hat{\eta}(k,t), \qquad k \in \mathcal{D}_4 \label{fundkeq}. \end{equation}
\end{prop}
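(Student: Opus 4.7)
The plan is to prove the identity by analytic continuation of the real-axis relation \eqref{halfFour} into the open fourth quadrant. Define
\[ F(k,t) \defn \hat{\eta}_t(k,t) + i k U \hat{\eta}(k,t) - k \hat{\varphi}(k,t), \]
so that \eqref{halfFour} asserts $F(k,t) = 0$ for $k > 0$, and rearranging $F \equiv 0$ on $\mathcal{D}_4$ yields \eqref{fundkeq}. The task thus reduces to extending $F \equiv 0$ from the ray $\mathbf{R}^+$ into $\mathcal{D}_4$.

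The key ingredient is that each of $\hat{\eta}$, $\hat{\eta}_t$ and $\hat{\varphi}$ is holomorphic on $\mathcal{D}_4$ and continuous up to $\mathbf{R}^+$. Since $\eta$ and $\varphi$ are defined on (and, for the purposes of the full-line Fourier convention, extended by zero off) $\mathbf{R}^+$, the kernel satisfies $|e^{-ikx}| = e^{\mathrm{Im}(k)\, x} \leq 1$ for $x \geq 0$ and $\mathrm{Im}(k) \leq 0$; this ensures absolute convergence of the defining integrals on $\overline{\mathcal{D}_4}$, and Morera's theorem combined with Fubini's theorem furnishes holomorphy in the interior. The time derivative $\hat{\eta}_t$ is handled analogously by differentiating under the integral sign, the required integrability of $\eta_t$ being inherited from the beam equation \eqref{half1} together with the regularity $\eta_0 \in H^2_{\langle 2 \rangle}(\mathbf{R}^+)$.

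Consequently $F(\cdot, t)$ is holomorphic on $\mathcal{D}_4$, continuous up to the boundary ray $\mathbf{R}^+$, and vanishes identically on that ray. By the Schwarz reflection principle, $F$ extends holomorphically across $\mathbf{R}^+$ to an open set containing $\mathcal{D}_4 \cup \mathbf{R}^+$; the extension then vanishes on a nonempty open real interval lying in its interior, so the identity theorem for holomorphic functions forces $F \equiv 0$ throughout $\mathcal{D}_4$. Dividing by $k$ (nonzero on $\mathcal{D}_4$) produces \eqref{fundkeq}. The only delicate point is ensuring boundary continuity and absolute convergence of the half-line transforms under the $H^2$-type assumptions of the paper; one either approximates by Schwartz data and passes to the limit by density, or else works directly in the weighted spaces $H^s_{\langle s\rangle}(\mathbf{R}^+)$ introduced earlier.
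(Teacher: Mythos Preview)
Your argument has a genuine gap concerning the behaviour of $\varphi$ on the negative half-line. You write that ``$\eta$ and $\varphi$ are defined on (and, for the purposes of the full-line Fourier convention, extended by zero off) $\mathbf{R}^+$''. This is correct for $\eta$, since the half-line problem stipulates $\eta\equiv 0$ for $x\le 0$, but it is \emph{not} correct for $\varphi$. The surface potential $\varphi(x,t)=\phi(x,0,t)$ is the trace of the harmonic velocity potential $\phi$, which lives in the fluid domain beneath the \emph{entire} real line; there is no reason for $\varphi$ to vanish for $x<0$.

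The consequence is that your function $F$, built from half-line transforms, does \emph{not} obviously satisfy $F(k,t)=0$ for $k>0$. Equation \eqref{halfFour} is stated for the \emph{full-line} Fourier transform (as the paper emphasises just after \eqref{half_line_eq}); rewriting it in terms of half-line transforms leaves the extra term
\[
k\int_{-\infty}^{0} e^{-ikx}\varphi(x,t)\,\dd x,
\]
which is not a priori zero. The paper's proof deals with exactly this discrepancy: it splits the full-line identity into a piece $\Phi^-$ analytic in $\mathcal{D}_4$ (your $F$) and the above integral $\Phi^+$, analytic in $\mathcal{D}_1$, and then reads $\Phi^+ + \Phi^- = 0$ on $\mathbf{R}^+$ as the jump condition of a scalar Riemann--Hilbert problem with $O(1/k)$ decay at infinity, whose unique solution is $\Phi^\pm\equiv 0$. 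Only after this step is $F\equiv 0$ on $\mathbf{R}^+$ established, at which point analytic continuation into $\mathcal{D}_4$ is legitimate. Without the Riemann--Hilbert argument (or some equivalent device disposing of the $x<0$ contribution of $\varphi$), the vanishing of $F$ on the positive real axis is unjustified and the proof does not go through.
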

\begin{proof}
Equation \eqref{halfFour} can be written as
\begin{align} 0 &=  k\hat{\varphi}  - \hat{\eta}_t  - iUk \hat{\eta} +k \int_{-\infty}^0 e^{-ikx}\varphi(x,t)\, \dd x  \nonumber \\
 &=  \Phi^+(k) + \Phi^-(k), \label{rhprob}
\end{align}
where $\Phi^+$ $(\Phi^-)$ is analytic in $\mathcal{D}_1$ ($\mathcal{D}_4)$, and integration by parts and the Riemann Lebesgue lemma gives $\Phi^{\pm}=O(1/k)$ at $\infty$. We observe that \eqref{rhprob} defines the jump condition of an elementary Riemann-Hilbert problem with associated contour $\gamma = \mathbf{R}^+$, and the unique solution is $\Phi^+(k)=\Phi^-(k)=0$. We conclude that for $k>0$ we have the following relation:
\[ \hat{\varphi} (k,t) = \frac{\hat{\eta}_t (k,t)}{k} + iU \hat{\eta}(k,t) \]
and by analytic continuation, this extends to $k\in\mathcal{D}_4$.
\end{proof}
\begin{prop}\label{propo}
The initial-boundary value problem stated in (1.1)-(1.6) is equivalent to the following problem:
\begin{alignat}{2} \left(1+\tfrac{1}{k}\right)\hat{\eta}_{tt} + 2iU\hat{\eta}_t + \left( k^4 - U^2k\right) \hat{\eta} &= f(k,t),&\quad &k\in \mathcal{D}_4,\,\, 0<t<T, \label{prop5a} \\ 
\tfrac{1}{k}\hat{\eta}_{tt}(k,0) + 2iU \hat{\eta}_t - kU^2 \hat{\eta} &= 0,& \quad & k\in \mathcal{D}_4, \label{prop5b}\\
\hat{\eta}(k,0) &= \hat{\eta}_0 (k),& \quad & k\in \mathcal{D}_4, \label{prop5c}\end{alignat}
where
\[ f(k,t) \equiv \eta_{xxx}(0,t)+ik\eta_{xx}(0,t).\]
\end{prop}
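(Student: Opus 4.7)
The plan is a direct calculation: take the half-line Fourier transform of the beam equation \eqref{half1} and of the initial condition \eqref{half3}, and then use Proposition \ref{fundk} to eliminate $\hat{\varphi}$ and its time derivative in favour of $\hat{\eta}$. This is the half-line analogue of the derivation alluded to (and omitted) after Proposition \ref{ibvpwholeprop}, the key new ingredient being the non-trivial boundary contributions at $x=0$.

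For the forward direction, four integrations by parts together with the hinge conditions $\eta(0,t)=\eta_x(0,t)=0$ give
\[
\widehat{\eta_{xxxx}}(k,t) = k^4\hat{\eta}(k,t) - \tfrac{1}{\sqrt{2\pi}}\bigl(\eta_{xxx}(0,t) + ik\,\eta_{xx}(0,t)\bigr),
\]
isolating the forcing $f(k,t)$. One further integration by parts yields $\widehat{\varphi_x} = ik\hat{\varphi}$ consistent with the formulation. I would then substitute the global relation $\hat{\varphi} = \hat{\eta}_t/k + iU\hat{\eta}$ from Proposition \ref{fundk} together with its time derivative $\hat{\varphi}_t = \hat{\eta}_{tt}/k + iU\hat{\eta}_t$ into the transform of \eqref{half1}, and group the terms by order of $t$-derivative of $\hat{\eta}$; the coefficients of $\hat{\eta}_{tt}$, $\hat{\eta}_t$, $\hat{\eta}$ collapse to $1+1/k$, $2iU$, $k^4-U^2 k$ respectively, producing \eqref{prop5a}. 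Equation \eqref{prop5c} is just the transform of \eqref{initial}. For \eqref{prop5b}, I would take the half-line Fourier transform of \eqref{half3} to obtain $\hat{\varphi}_t(k,0) + ikU\hat{\varphi}(k,0)=0$ and then use Proposition \ref{fundk} and its time derivative evaluated at $t=0$ to rewrite this purely in terms of $\hat{\eta}$ and its time derivatives at $t=0$.

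For the converse, given $\hat{\eta}$ solving the ODE system, I would define $\hat{\varphi}$ via Proposition \ref{fundk} and recover $\eta,\varphi$ by inverse Fourier transform; the hinge conditions are restored by noting that the resulting $\eta$ lives in the image of the continuous extension operator $E$ introduced earlier, and reversing the algebra yields \eqref{half1}--\eqref{halfFour}. The main obstacle I expect is the careful bookkeeping of boundary contributions at $x=0$---in particular, verifying that the boundary terms arising from $\widehat{\varphi_x}$ are consistent with the stated form of $f(k,t)$ and with the Riemann--Hilbert setup of Proposition \ref{fundk}---together with checking that all manipulations (the integrations by parts and the substitution of the global relation) remain valid in the function class $H^2_{\langle 2\rangle}(\mathbf{R}^+)$ in which the problem is posed.
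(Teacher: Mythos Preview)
Your approach is essentially identical to the paper's: apply the half-line Fourier transform to \eqref{half1}, integrate by parts using the hinge conditions to produce the boundary forcing $f(k,t)$, then substitute $\hat{\varphi}=\hat{\eta}_t/k+iU\hat{\eta}$ and its $t$-derivative from Proposition~\ref{fundk}; treat \eqref{half3} the same way to obtain \eqref{prop5b}; and transform \eqref{initial} for \eqref{prop5c}. The one point you flag as a potential obstacle---the extra boundary term $\varphi(0,t)$ from $\widehat{\varphi_x}$---is handled in the paper simply by setting $\phi(0,0,t)=0$ without loss of generality, so there is no hidden difficulty there.
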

\begin{proof}
We begin by applying the Fourier transform on $\R^+$ to \eqref{half1}. Integrating by parts several times gives:
\[ \hat{\eta}_{tt} + k^4 \hat{\eta} + \hat{\varphi}_t + ikU\hat{\varphi} = \eta_{xxx}(0,t) + ik\eta_{xx}(0,t), \]
where again we have set $\phi (0,0,t)=0$ without loss of generality. Using the result from proposition \ref{fundk}, and the result from differentiating \eqref{fundkeq}, we find \eqref{prop5a}. Next, we apply the Fourier transform to \eqref{half3}, which gives:
\[ \hat{\varphi}_t (k,0) + ikU\hat{\varphi}(k,0) = 0. \]
Using \eqref{fundkeq} as well as the time derivative of \eqref{fundkeq} we find \eqref{prop5b}. Finally, \eqref{prop5c} follows from the application of the Fourier transform to the initial data in \eqref{initial}.
\end{proof}
\begin{prop}
The solution to the IBVP presented in proposition \ref{propo} is given by:
\begin{align} \hspace{-4mm}\hat{\eta} (k,t) &= \left [ c_+ (k) + \alpha (k)F_t(\omega_+,k) \right ]e^{-i\omega_+ t} + \left [ c_- (k) - \alpha (k)F_t(\omega_-,k) \right ]e^{-i\omega_- t}, \nonumber\\
 & \hspace{7.5cm} k\in\mathcal{D}_4,\,\, 0<t<T, \label{soln} \end{align}
where $c_\pm$ and $\omega_\pm$ are defined in \eqref{cpm} and \eqref{omegapm} and $F_t\{\omega (k),k\}$ is defined by:
\begin{equation} F_t\left\{ \omega (k),k\right\} = \int_0^t
  e^{i\omega(k)\tau}\left [
    \eta_{xxx}(0,\tau)+ik\eta_{xx}(0,\tau)\right ]\, \mathrm d\tau,\quad k\in\mathbf{C}, \,\, 0<t<T. \end{equation}
This will be referred to as the global relation \cite{fokas1997utm}.
\end{prop}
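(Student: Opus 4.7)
The plan is to recognise \eqref{prop5a} as an inhomogeneous linear second-order ODE in the time variable $t$, parametrised by $k \in \mathcal{D}_4$, and to solve it by the classical method of variation of parameters. The eventual verification reduces to checking two algebraic facts: that $\omega_\pm(k)$ are the characteristic exponents of the homogeneous problem, and that $c_\pm(k)$ correctly fit the two initial conditions \eqref{prop5b}--\eqref{prop5c}.

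First I would fix $k\in\mathcal{D}_4$ and observe that \eqref{prop5a} is a linear ODE in $t$ with $k$-dependent constant coefficients. Substituting the ansatz $\hat\eta = e^{-i\omega t}$ into the associated homogeneous equation yields (up to a sign) the dispersion polynomial $D(k,\omega)$, so by Definition \ref{defs} the fundamental homogeneous solutions are $e^{-i\omega_\pm(k) t}$ and the general homogeneous piece is $c_+(k) e^{-i\omega_+ t} + c_-(k) e^{-i\omega_- t}$, with $c_\pm(k)$ still to be chosen.

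Next I would construct a particular solution by variation of parameters. The Wronskian of $\{e^{-i\omega_\pm t}\}$ equals $-i(\omega_- - \omega_+) e^{-i(\omega_+ + \omega_-) t}$, so Duhamel's formula yields a particular solution of the form $\alpha(k)\bigl[F_t(\omega_+,k) e^{-i\omega_+ t} - F_t(\omega_-,k) e^{-i\omega_- t}\bigr]$, in which $\alpha(k) = [i(\omega_- - \omega_+)]^{-1}$ is precisely the normalised inverse-Wronskian factor from Definition \ref{defs}. Because $F_t(\omega_\pm,k)$ vanishes at $t=0$ and the boundary contributions from $\partial_t F_t(\omega_\pm,k) = e^{i\omega_\pm t} f(k,t)$ cancel between the two Duhamel integrals upon differentiating once, the choice of $c_\pm$ decouples cleanly from the forcing, while differentiating a second time reproduces exactly the right-hand side $f(k,t)$.

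Finally I would match the initial data. Condition \eqref{prop5c} gives immediately $c_+(k) + c_-(k) = \hat\eta_0(k)$. For the second condition \eqref{prop5b}, I would use \eqref{prop5a} evaluated at $t=0$ to eliminate $\hat\eta_{tt}(k,0)$ in favour of $\hat\eta_t(k,0)$; this leaves a second linear relation on $c_\pm$, and solving the resulting $2\times 2$ linear system by Cramer's rule produces precisely the closed forms $c_\pm(k) = \pm(\omega_\mp^2 - k^4)(\omega_-^2 - \omega_+^2)^{-1}\hat\eta_0(k)$ of Definition \ref{defs}. The main obstacle is purely bookkeeping: one must carefully propagate the $(1+1/k)$ factor and the signs in the dispersion polynomial when combining \eqref{prop5a} at $t=0$ with \eqref{prop5b}, and then verify that the numerators collapse to $\omega_\mp^2 - k^4$ as claimed; analyticity of the resulting expression in $k\in\mathcal{D}_4$ is then immediate from the analyticity of $\omega_\pm$, $\alpha$ and $\hat\eta_0$ there.
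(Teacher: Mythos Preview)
Your approach is correct and is the natural \emph{constructive} route: identify $e^{-i\omega_\pm t}$ as a fundamental set for the homogeneous equation, manufacture a particular solution by variation of parameters (Duhamel), and then determine $c_\pm$ from the two initial constraints. The paper takes the complementary \emph{verificational} route: it simply differentiates the stated expression \eqref{soln}, uses the fundamental theorem of calculus on the $F_t$ integrals, and checks directly that \eqref{prop5a}--\eqref{prop5c} hold. Your argument explains where the formula and the factor $\alpha(k)$ come from; the paper's one-line check is quicker once the answer is already written down, and sidesteps the need to solve the $2\times 2$ system for $c_\pm$ explicitly. One bookkeeping caution: for an ODE with leading coefficient $(1+\tfrac{1}{k})$, Duhamel's formula actually produces the prefactor $\bigl[(1+\tfrac{1}{k})\,i(\omega_- - \omega_+)\bigr]^{-1}$, not $\alpha(k)=\bigl[i(\omega_--\omega_+)\bigr]^{-1}$ alone; when you ``carefully propagate the $(1+1/k)$ factor'' as you rightly flag, you will find this extra $k/(k+1)$ and should reconcile it with the paper's stated definitions rather than assert that Duhamel gives $\alpha(k)$ on the nose.
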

\begin{proof}
Differentiating and using the fundamental theorem of calculus we find the RHS of \eqref{soln} solves \eqref{prop5a} and satisfies \eqref{prop5b}-\eqref{prop5c}.
\end{proof}
We have now obtained a solution to the IBVP in \eqref{half_line_eq}, but in terms of the \emph{unknown} functions $\eta_{xxx}(0,t)$ and $\eta_{xx}(0,t)$. These functions will be determined in \S 4.

\begin{remark}For a number of IBVPs it is possible to eliminate the \emph{transforms} of the unknown boundary values using only \emph{algebraic} manipulations \cite{fokas1997utm}. This approach utilises the analytic dependence on $k$, hence it suggests that we should re-parameterise the spectral problem, so that $\omega$ takes a simpler form. However, there does \emph{not} exist a rational re-parameterisation $k=k(t)$, $\omega = \omega(t)$. Indeed, let us seek a rational parameterisation to the equation $P(X,Y;U)=0$, where
\[ P(X,Y;U)\defn XY^2 - X^5 + (Y-UX)^2 \]
This equation defines\footnote{Here we assume $U$ is rational, which is without loss of generality, since $U$ corresponds to a physical constant and can be approximated arbitrarily by rationals.} an algebraic curve over $\mathbf{Q}$, and it is possible to show that the genus of this curve, $g(P)$, is greater than zero for all but a few (unphysical) values of the parameter $U$. With this result, Falting's theorem (see below) implies that there is \emph{no} rational parameterisation.
\begin{theorem*}[Faltings \cite{faltings1984fta}]Suppose $C$ is a non-singular algebraic curve over $\mathbf{Q}$ with genus $g(C)$. If $g(C)>0$ then only a finite number of rational points lie on $C$.\end{theorem*}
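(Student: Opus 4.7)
The plan is to prove this celebrated theorem of Faltings (formerly the Mordell conjecture), and since the argument is long and deep I only sketch the strategy rather than attempt a full reproduction. The overall approach reduces the statement about curves to finiteness theorems for abelian varieties, via the Jacobian construction and the Mordell--Weil theorem.

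First, I would embed $C$ into its Jacobian $J = J(C)$ using a choice of rational base point, passing to a finite extension of $\mathbf{Q}$ if none exists (the conclusion descends through finite extensions, since a curve has finitely many rational points iff it has finitely many over any fixed finite extension, by taking the union over embeddings). Since $J$ is an abelian variety of dimension $g \geq 1$, the Mordell--Weil theorem gives that $J(\mathbf{Q})$ is a finitely generated abelian group, and $C(\mathbf{Q}) = C(\overline{\mathbf{Q}}) \cap J(\mathbf{Q})$. So it suffices to show that a curve of positive genus meets a finitely generated subgroup of its Jacobian in only finitely many points. Next, following Faltings, I would introduce the Faltings height of an abelian variety via Arakelov intersection theory on the associated arithmetic surface, and establish (i) within each isogeny class the Faltings height is bounded; (ii) only finitely many isogeny classes admit a representative of bounded Faltings height with good reduction outside a fixed finite set of primes; and (iii) the Tate conjecture, that an abelian variety over $\mathbf{Q}$ is determined up to isogeny by the Galois representation on its Tate module. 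These combine to give the Shafarevich conjecture: only finitely many abelian varieties of fixed dimension exist over $\mathbf{Q}$ with prescribed bad reduction. Parshin's construction, which attaches to each $P \in C(\mathbf{Q})$ an unramified covering of $C$ whose Jacobian has controlled bad reduction and bounded dimension, then converts Shafarevich finiteness into finiteness of $C(\mathbf{Q})$.

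The main obstacle is the development and precise control of the Faltings height, in particular its semicontinuity and transformation behavior under isogeny; this is the technical heart of the argument and requires substantial Arakelov-theoretic machinery together with Raynaud's theory of finite flat group schemes. A qualitatively different proof due to Vojta, reworked and simplified by Bombieri, proceeds by Diophantine approximation on the self-product $C \times C$, deriving a contradiction from the existence of infinitely many rational points of large height by carefully intersecting canonical divisors on that surface; this bypasses the Tate conjecture and is arguably the more accessible route today, though it is by no means elementary. For the present application--ruling out a rational parameterisation of the algebraic curve $P(X,Y;U)=0$--none of this internal structure is needed: one invokes the theorem as a black box, and so the only work left to the reader is the computation of the genus $g(P)$ for the stated polynomial, which can be done with the standard formulae for plane curves after resolving singularities.
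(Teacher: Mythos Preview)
The paper does not prove this theorem at all: it is quoted as a black-box result inside a remark, with a citation to Faltings. So there is no ``paper's own proof'' to compare against, and you yourself acknowledge as much in your final paragraph. Your sketch of the Faltings and Vojta--Bombieri strategies is a fair high-level summary of how the theorem is actually established for curves of genus $g \geq 2$.

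There is, however, a genuine gap that you do not flag: the theorem \emph{as stated in the paper is false}. Faltings' theorem (the Mordell conjecture) requires $g(C) \geq 2$, not merely $g(C) > 0$. For $g(C)=1$ the curve is, after a choice of base point, an elliptic curve, and such curves over $\mathbf{Q}$ can have infinitely many rational points (any elliptic curve of positive Mordell--Weil rank furnishes a counterexample). Your outline silently inherits this error: the embedding $C \hookrightarrow J$ is the identity when $g=1$, so $C(\mathbf{Q}) = J(\mathbf{Q})$ is the whole finitely generated group, and Parshin's covering trick does not produce the required finiteness.

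For the paper's intended corollary---that the curve $P(X,Y;U)=0$ admits no rational parameterisation once $g(P)>0$---Faltings is in any case massive overkill. A rational parameterisation is a dominant map $\mathbf{P}^1 \to C$, and by Riemann--Hurwitz (or L\"uroth's theorem) such a map forces $g(C)=0$. That elementary fact is all that is needed, and it is correct for all $g>0$, which is presumably why the authors wrote the hypothesis that way; but attributing it to Faltings is a misstatement.
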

\noindent
An immediate corollary is that for $g(P)>0$, the equation $P(X,Y;U)=0$ has \emph{no} rational parameterisation.
\end{remark}

\section{Determination of the Unknown Boundary Values}
The solution $\eta (x,t)$ requires that $\hat{\eta}(k,t)$ is solved for $k>0$. However, the global relation \eqref{soln} is valid for $k$ in a much larger domain, namely $k\in \mathcal{D}_4$. It turns out that this extra freedom allows us to determine the unknown boundary values $\eta_{xxx}(0,t)$ and $\eta_{xx}(0,t)$.

Our analysis involves the complex $k$-plane, so we must first choose appropriate branches for the functions $\omega_\pm (k)$. It is clear that the branch points of $\omega_\pm (k)$ are at $k=0$ and at the three roots of the cubic equation
\[ k^3 + k^2 -U^2 =0. \]
Since $U$ is real, two of the roots are a complex conjugate pair and the remaining root is real. The loci of these points is shown in Figure 2 with appropriate branch cuts.
\begin{figure}\label{locicuts}
\begin{center}
\includegraphics[scale=0.17]{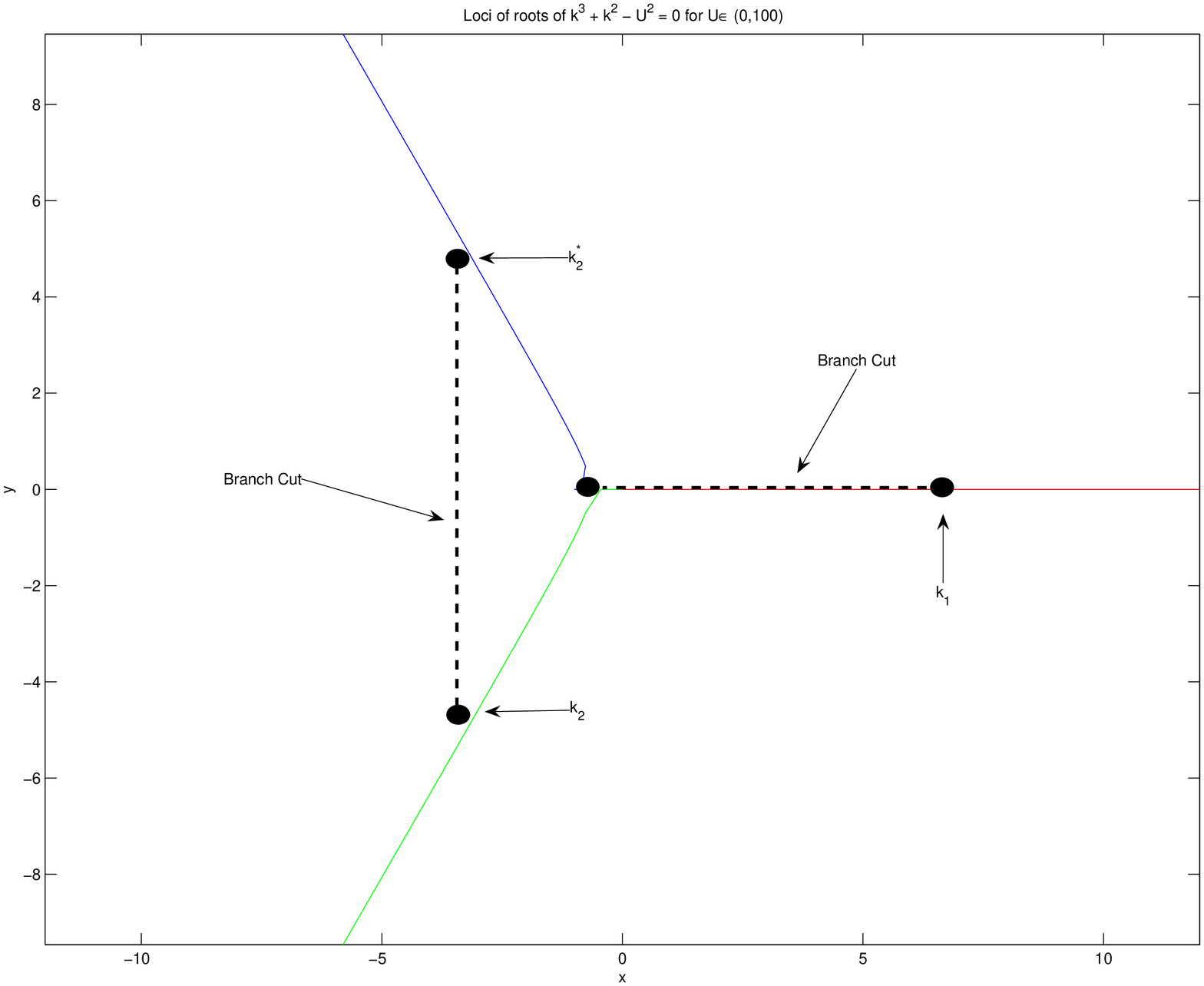}
\hspace{0.5cm}
\setlength{\unitlength}{2279sp}%
\begingroup\makeatletter\ifx\SetFigFontNFSS\undefined%
\gdef\SetFigFontNFSS#1#2#3#4#5{%
  \reset@font\fontsize{#1}{#2pt}%
  \fontfamily{#3}\fontseries{#4}\fontshape{#5}%
  \selectfont}%
\fi\endgroup%
\begin{picture}(4269,3399)(-461,-2113)
\thinlines
{\color[rgb]{0,0,0}\put(  1,1274){\vector( 0,-1){3375}}
}%
{\color[rgb]{0,0,0}\put(-449,839){\line( 1, 0){4035}}
\put(3586,839){\vector( 1, 0){210}}
}%
{\color[rgb]{0,0,0}\multiput(676,-2011)(0.00000,207.77778){14}{\line( 0, 1){103.889}}
\multiput(676,794)(214.80000,0.00000){13}{\line( 1, 0){107.400}}
\put(3361,794){\vector( 1, 0){ 15}}
}%
{\color[rgb]{0,0,0}\put(766,359){\line( 1, 1){375}}
}%
{\color[rgb]{0,0,0}\put(751,-106){\line( 1, 1){855}}
}%
{\color[rgb]{0,0,0}\put(751,-511){\line( 1, 1){1260}}
}%
{\color[rgb]{0,0,0}\put(751,-961){\line( 1, 1){1695}}
}%
{\color[rgb]{0,0,0}\put(751,-1441){\line( 1, 1){2175}}
}%
{\color[rgb]{0,0,0}\put(766,-1876){\line( 1, 1){1080}}
}%
{\color[rgb]{0,0,0}\put(3226,584){\line(-1,-1){915}}
}%
{\color[rgb]{0,0,0}\put(1291,-1876){\line( 1, 1){1785}}
}%
{\color[rgb]{0,0,0}\put(1921,-1876){\line( 1, 1){1155}}
}%
{\color[rgb]{0,0,0}\put(2581,-1876){\line( 1, 1){495}}
}%
\put(3421,929){\makebox(0,0)[lb]{\smash{{\SetFigFontNFSS{7}{8.4}{\familydefault}{\mddefault}{\updefault}{\color[rgb]{0,0,0}$\Re k$}%
}}}}
\put(-389,-1831){\makebox(0,0)[lb]{\smash{{\SetFigFontNFSS{7}{8.4}{\familydefault}{\mddefault}{\updefault}{\color[rgb]{0,0,0}$\Im k$}%
}}}}
\put(271,959){\makebox(0,0)[lb]{\smash{{\SetFigFontNFSS{7}{8.4}{\familydefault}{\mddefault}{\updefault}{\color[rgb]{0,0,0}$\Re k=\tfrac{1}{4}$}%
}}}}
\put(346,-631){\makebox(0,0)[lb]{\smash{{\SetFigFontNFSS{7}{8.4}{\familydefault}{\mddefault}{\updefault}{\color[rgb]{0,0,0}$\gamma$}%
}}}}
\put(1921,-616){\makebox(0,0)[lb]{\smash{{\SetFigFontNFSS{7}{8.4}{\familydefault}{\mddefault}{\updefault}{\color[rgb]{0,0,0}$D$}%
}}}}
\end{picture}%
\caption{Loci of branch points (left). The contour $\gamma$ and domain $D$ (right).}
\end{center}
\end{figure}
\begin{prop}\label{propmain}
Let $\hat{\eta}(k,t)$ satisfy the global relation \eqref{soln} for $k\in\mathcal{D}_4$. Then the unknown functions $\eta_{xxx}(0,t)$ and $\eta_{xx}(0,t)$ satisfy the equation
\begin{equation} 2\pi i \eta_{xx}(0,t) = g(t) - \int_0^t K(t-\tau) \eta_{xxx}(0,\tau)\, \dd \tau, \label{Fredholm} \end{equation}
where the functions $g(t)$ and $K(t)$ are defined as follows:
\[ g(t) = \int_\gamma \frac{c_- (k)}{\alpha (k)} e^{-i\omega_-(k)t}\, \dd \mu(k), \qquad K(t) = \int_\gamma e^{-i\omega_-(k)t}\, \dd \mu(k). \]
The contour $\gamma$ is defined by $\gamma  = (-i\infty + \tfrac{1}{4}, \tfrac{1}{4}] \cup [\tfrac{1}{4}, \infty)$ and $\dd \mu =\tfrac{1}{k}\dd \omega_-(k)$ defines an analytic measure \cite{forelli1963am} on $\gamma$.
\end{prop}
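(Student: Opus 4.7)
The plan is to extract the asserted Fredholm equation from the global relation \eqref{soln} by applying a carefully chosen spectral integration. The contour $\gamma$ and analytic measure $\dd\mu$ are designed so that integrating a rearrangement of \eqref{soln} against $\dd\mu(k)/\alpha(k)$ will (i) collapse the $\hat{\eta}$ and $\omega_+$-contributions by Cauchy's theorem in the domain $D$ of Figure 2, and (ii) extract the pointwise value $\eta_{xx}(0,t)$ from inside $F_t(\omega_-,k)$ as a boundary jump, producing the factor $2\pi i$.

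Concretely, I would first rewrite \eqref{soln} in the equivalent form
\[
\alpha(k)F_t(\omega_-,k)e^{-i\omega_- t}=c_-(k)e^{-i\omega_- t}-\hat{\eta}(k,t)+\bigl[c_+(k)+\alpha(k)F_t(\omega_+,k)\bigr]e^{-i\omega_+ t},
\]
valid for $k\in\mathcal{D}_4$. Dividing by $\alpha(k)$ and integrating along $\gamma$ against $\dd\mu$, Fubini together with the identity $k\,\dd\mu(k)=\dd\omega_-(k)$ converts the left hand side into
\[
\int_0^t K(t-\tau)\eta_{xxx}(0,\tau)\,\dd\tau+i\int_0^t\eta_{xx}(0,\tau)\left[\int_\gamma e^{-i\omega_-(k)(t-\tau)}\,\dd\omega_-(k)\right]\dd\tau,
\]
while the leading term on the right hand side is $g(t)$ by definition. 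The claim therefore reduces to (a) showing that the $\omega$-integral in brackets behaves distributionally as a multiple of $\delta(t-\tau)$, producing the advertised $2\pi i\,\eta_{xx}(0,t)$; and (b) showing that the remaining three contributions on the right hand side (involving $\hat{\eta}$, $c_+$, and $F_t(\omega_+,k)$) integrate to zero.

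For (a), I would change variables $\omega=\omega_-(k)$: by the large-$k$ asymptotics of \eqref{omegapm}, the image $\omega_-(\gamma)$ is an unbounded curve which, after analytic deformation within the analyticity domain of $\omega_-$, may be reduced to a standard Fourier inversion contour on which $\int e^{-i\omega s}\,\dd\omega$ is the Dirac mass at $s=0$; the endpoint contribution $\tau=t$ then yields the boundary value $\eta_{xx}(0,t)$ with the appropriate $2\pi i$ prefactor. For (b), I would close $\gamma$ across the domain $D$: the integrand $\hat{\eta}(k,t)/\alpha(k)$ is analytic throughout $D$, since $\hat{\eta}$ extends analytically through $\mathcal{D}_4$ and the zeros of $1/\alpha=i(\omega_--\omega_+)$ lie outside $D$ by the branch-cut prescription of Figure 2, and it decays on $\partial D$ at infinity by Riemann--Lebesgue; the $c_+$ and $F_t(\omega_+,k)$ terms carry the exponential $e^{-i\omega_+(k)t}$ which, since $\operatorname{Im}\omega_+(k)<0$ on $D$ (direct inspection of \eqref{omegapm} with $k\in\mathcal{D}_4$ and $\omega_+\sim k^{3/2}$), decays uniformly on arcs of $\partial D$ at infinity for $t>0$. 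All three integrals therefore vanish by Cauchy's theorem.

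The principal obstacle is step (b): verifying that the closure of $\gamma$ into $D$ encounters no branch cut of $\omega_\pm$, together with establishing uniform decay of each integrand on the supplementary arcs at infinity. The decay of the $\hat{\eta}/\alpha$ contribution in particular leans on the regularity hypothesis $\eta_0\in H^3_{\langle 3\rangle}(\R^+)$ to secure sufficient smoothness of $\hat{\eta}_0$, and hence of $c_\pm$ and $\hat{\eta}$, as $|k|\to\infty$. The delta-like identity in (a), while morally a standard Fourier inversion, must also be interpreted distributionally because the trace $\eta_{xx}(0,\cdot)$ is only of $L^2$ type, so care is needed to justify the Sokhotski--Plemelj-style extraction of the endpoint boundary value.
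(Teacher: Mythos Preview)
Your overall architecture matches the paper's: multiply the global relation by $(k\alpha(k))^{-1}\,\dd\omega_-$, integrate along $\gamma$, use the distributional identity $\int_\gamma e^{-i\omega_-(k)s}\,\dd\omega_-(k)=2\pi\delta(s)$ to extract $\eta_{xx}(0,t)$, and close the unwanted terms in $D$ via Cauchy's theorem. The difficulty is in your step (b), and specifically in the $\hat\eta$ contribution.

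Working directly at time $t$, you must show that $\int_\gamma \hat\eta(k,t)\,\alpha(k)^{-1}\,\dd\mu$ vanishes. But $1/\alpha(k)\sim -2ik^2$ and $\dd\mu\sim -2\,\dd k$, so the integrand behaves like $k^{2}\hat\eta(k,t)\,\dd k$ for large $|k|$. There is \emph{no} exponential factor here; Riemann--Lebesgue alone gives only $\hat\eta(k,t)\to 0$, and even under the strongest hypothesis you invoke, $\eta_0\in H^3_{\langle 3\rangle}(\mathbf{R}^+)$ (which is stronger than the $H^2_{\langle 2\rangle}$ assumed at this point in the paper), integration by parts yields at best $\hat\eta(k,t)=O(k^{-3})$, so the integrand is $O(k^{-1})\,\dd k$ and the closing arc at radius $R$ contributes $O(1)$, not $o(1)$. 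The contour cannot be closed and the term does not vanish. The paper avoids exactly this obstruction by first evaluating \eqref{soln} at $t=T$ and multiplying by $e^{i\omega_-(T-t)}$: the $\hat\eta(k,T)$ term then carries the factor $e^{i\omega_-(T-t)}$, which decays like $e^{-c|k|^2}$ on arcs in the interior of $D$ (since $T>t$ and $\Im\omega_-\sim -2k_Rk_I>0$ there) and dominates any polynomial growth. The price is that one obtains $F_T$ rather than $F_t$ on the right, and a second Cauchy argument is needed to discard the $\int_t^T$ portion of the convolution (the integrand there has $t-\tau<0$, so $e^{-i\omega_-(t-\tau)}$ is again exponentially decaying in $D$).

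A minor correction: the large-$k$ asymptotics of \eqref{omegapm} give $\omega_\pm(k)=\pm k^2\mp\tfrac12 k+O(1)$, not $\omega_+\sim k^{3/2}$; this does not affect your argument for the $c_+$ and $F_t(\omega_+,k)$ terms, which is essentially correct, but the stated exponent should be fixed.
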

\begin{proof}
We evaluate \eqref{soln} at $t=T$, and multiply the resulting expression by
\[ \frac{ e^{i\omega_- (T-t)}}{k\alpha (k)} \left( \frac{\dd \omega_-}{\dd k}\right) \]
and then we integrate with respect to $\dd k$ along $\gamma$:
\begin{multline}
 \int_{\gamma }\frac{\hat{\eta}(k,T)}{\alpha (k)}e^{i\omega_- (T-t)}\, \dd\mu  - \int_{\gamma } \left [\frac{c_+(k)}{\alpha(k)} + F_T(\omega_+,k) \right ]e^{-i(\omega_+-\omega_-) T}e^{-i\omega_- t}\, \dd\mu  \\
  =\int_{\gamma } \left [\frac{ c_- (k)}{\alpha(k)} -  F_T(\omega_-,k) \right ]e^{-i\omega_- t} \, \dd\mu.
\end{multline}
The integrand of the integral on the RHS of this equation is bounded and analytic in the domain bounded $D\subset \mathcal{D}_4$ defined by:
\[ D= \{ k\in \mathcal{D}_4: \Re k \geq \tfrac{1}{4} \}. \]
Indeed, this is a consequence of the following two facts:
\begin{enumerate}
 \item The following estimates hold for $k\rightarrow \infty$ in $D$:
\[\frac{c_- (k)}{k\alpha (k)}\left( \frac{\dd \omega_-}{\dd k}\right) =  O\left( \frac{1}{k}\right), \quad \frac{\hat{\eta}(k,T)}{k\alpha(k)} \left( \frac{\dd \omega_-}{\dd k}\right) =  O\left( \frac{1}{k} \right).\]
The determination of these asymptotic estimates uses integration by parts, the Riemann-Lebesgue lemma and the assumptions $\hat{\eta}_0 \in H^2_{\langle 2\rangle}(\mathbf{R}^+)$ and $\eta (0,T)=\eta_x(0,T)=0$.
 \item For $|k|$ sufficiently large, $t>0$ we have:
\begin{align*}
|\exp (-i\omega_+ t)| &< \exp \left\{ \left(\epsilon+\tfrac{1}{2}k_I[4k_R-1]\right)t\right\} \\
|\exp (-i\omega_- t)| &< \exp \left\{ \left(\epsilon-\tfrac{1}{2}k_I[4k_R-1]\right)t\right\}
 \end{align*}
for $\epsilon>0$. The determination of these estimates is based on the asymptotic formulae:
\begin{align*}
 \omega_+ (k) &= +k^2 - \tfrac{1}{2}k + (U+\tfrac{3}{8}) + o(1), \qquad k\rightarrow \infty, \\
 \omega_- (k) &= -k^2 + \tfrac{1}{2}k + (U-\tfrac{3}{8}) + o(1), \qquad k\rightarrow \infty.
\end{align*}
Thus for $|k|$ sufficiently large we have $| \Im \omega_+ (k) - \Im\left( k^2 -\tfrac{1}{2}k\right) | < \epsilon$.
\end{enumerate}
\begin{figure}
\begin{center}
\includegraphics[scale=0.25]{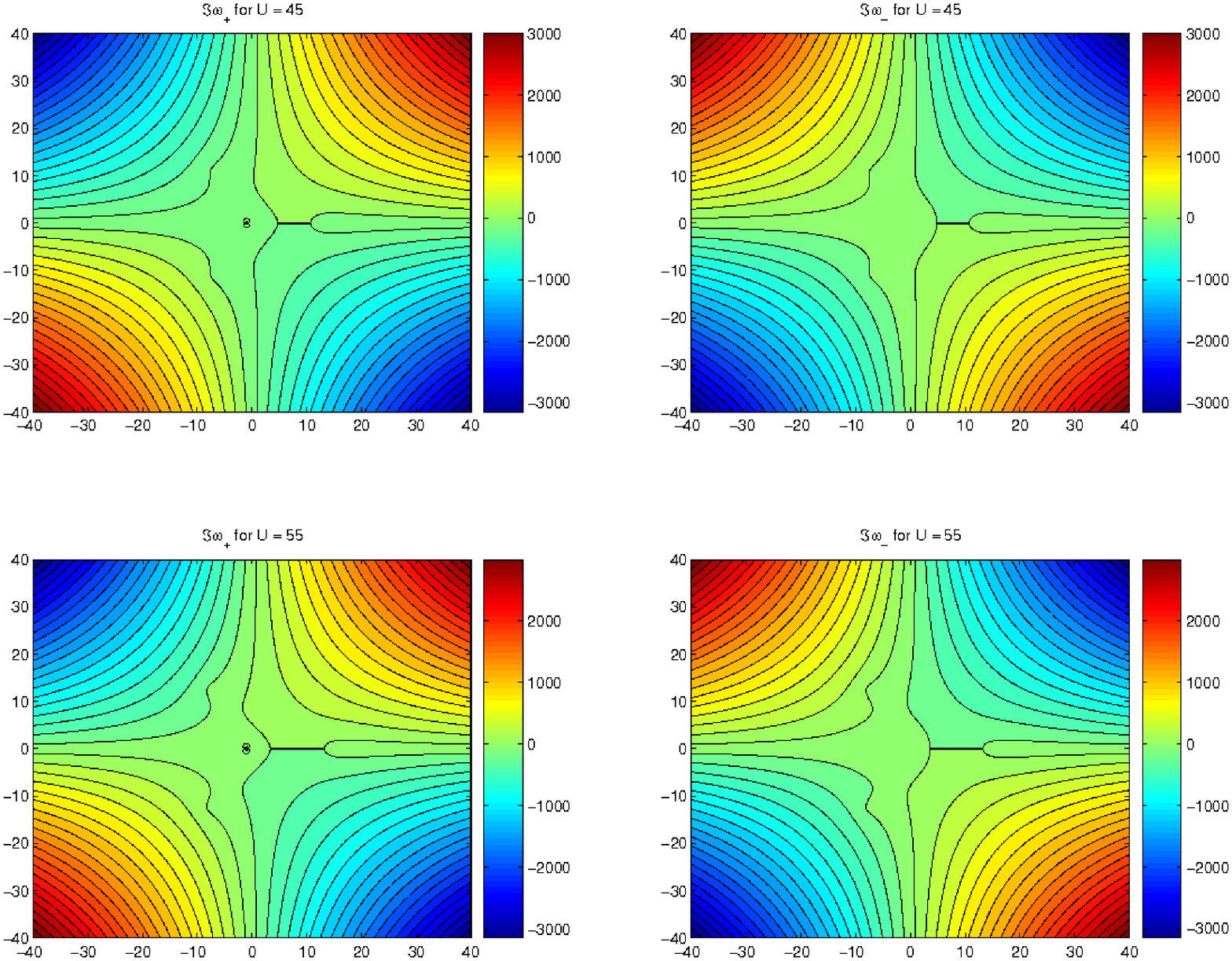}
\caption{Contour plots of $\Im \omega_+$ and $\Im \omega_-$ for two values of $U$.}
\end{center}
\end{figure}
Using these facts as well as the equation
\[ \int_\gamma e^{-i\omega_-(k)(t-\tau)}\, \dd \omega_-(k) = 2\pi \delta (t-\tau), \]
where the equality is understood in the distributional sense, we find:
\[0 =  \int_{\gamma }\frac{ c_- (k)}{\alpha(k)}e^{-i\omega_- t} \, \dd\mu - \int_0^T\!\! \int_\gamma  e^{i\omega_- (\tau-t)} \eta_{xxx}(0,\tau)\,\dd\mu\,\dd \tau - 2\pi i \eta_{xx}(0,t). \]
Finally, we observe that:
\[ \int_{\gamma }\int_t^T e^{-i\omega_- (t-\tau)} \eta_{xxx}(0,\tau)\,\mathrm d\tau\, \dd\mu = 0, \]
which follows from the fact that $t-\tau<0$ in the integrand, so the integrand is bounded and analytic on the RHS of $\gamma$ (i.e in $D$), and them Cauchy's theorem implies that the above integral vanishes.\end{proof}
\begin{lem}\label{integralop}
Let $K(t)$ and $g(t)$ be defined by Proposition \ref{propmain} and let $\Sigma\subset \mathbf{R}^+$ be a bounded interval. Then:
\begin{enumerate}[(a)]
 \item The integral operator $K$ defined by:
\[ Kf \defn \int_0^t K(t-\tau) f(\tau)\, \dd \tau, \qquad f \in L^2_{\dd t}(\Sigma)\]
has a weak singularity.
\item $g \in L^2_{\dd t}(\Sigma)$.
\end{enumerate}
\end{lem}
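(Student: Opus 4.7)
The plan is to study both integrals through the change of variable $u = \omega_-(k)$ along $\gamma$. Using the asymptotic form $\omega_-(k) = -k^2 + \tfrac{1}{2}k + (U - \tfrac{3}{8}) + o(1)$ as $|k| \to \infty$, the map $\omega_-$ sends the horizontal branch $[\tfrac{1}{4},\infty)$ onto a curve tending to $-\infty$ and the vertical branch onto a curve tending to $+\infty$, producing a contour $\Gamma$ in the $u$-plane whose distance to $\mathbf{R}$ is bounded. Since the integrand depends on $k$ analytically away from the branch cuts of Figure \ref{locicuts}, which lie outside the region swept out by the deformation from $\Gamma$ to $\mathbf{R}$, one obtains
\[
K(t) = \int_{\mathbf{R}} \frac{1}{k(u)}\, e^{-iut}\, \dd u, \qquad g(t) = \int_{\mathbf{R}} \frac{c_-(k(u))}{k(u)\,\alpha(k(u))}\, e^{-iut}\, \dd u,
\]
identifying $K$ and $g$ as Fourier transforms of explicit functions, to which Plancherel and Fresnel asymptotics apply.

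For part (a), the relation $u \sim \mp k^2$ on the two branches yields $k(u) \sim |u|^{1/2}$, so $1/k(u) = A_{\pm} |u|^{-1/2}(1 + o(1))$ at infinity. A direct scaling $v = ut$ applied to $\int_{\mathbf{R}} |u|^{-1/2} e^{-iut}\, \dd u$ gives the Fresnel estimate $|K(t)| \leq C t^{-1/2}$ as $t \to 0^+$; smoothness of $K(t)$ for $t > 0$ is immediate from differentiation under the integral (the extra factor $-iu$ improves the $u$-decay enough to justify the interchange). On the bounded set $\Sigma$ one therefore has $|K(t-\tau)| \leq C|t-\tau|^{-1/2}$, which is a weakly singular kernel in the standard sense; the associated Volterra operator then maps $L^2(\Sigma)$ into itself boundedly (and indeed compactly).

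For part (b), combining the asymptotics of $\omega_{\pm}$ with the identity $\omega_+^2 - k^4 = O(k^3)$ and $\omega_- - \omega_+ \sim -2k^2$ yields, after simplification,
\[
\frac{c_-(k)}{k\,\alpha(k)} \sim \frac{i\,k^2}{2U}\,\hat{\eta}_0(k), \qquad |k| \to \infty \text{ on } \gamma.
\]
Hence the transformed integrand $G(u) := c_-(k(u))/[k(u)\alpha(k(u))]$ satisfies $|G(u)| \lesssim |u|\,|\hat{\eta}_0(k(u))|$, and Plancherel gives
\[
\|g\|_{L^2(\mathbf{R})}^2 = 2\pi \|G\|_{L^2(\mathbf{R})}^2 \leq C \int_0^\infty k^5\, |\hat{\eta}_0(k)|^2\, \dd k
\]
after returning to the $k$-variable via $\dd u = 2k\,\dd k$ on the horizontal branch (and analogously on the vertical branch). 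For $\eta_0 \in H^3_{\langle 3\rangle}(\mathbf{R}^+)$ this integral is finite, since $k^5 \leq k^4 + k^6$ and both $k^2 \hat{\eta}_0$ and $k^3 \hat{\eta}_0$ lie in $L^2$. For the vertical branch one evaluates $\hat{\eta}_0$ in the lower half-plane, where the vanishing of $\eta_0$ and its first three derivatives at the origin permits three integrations by parts to give $|\hat{\eta}_0(1/4-is)| = O(s^{-7/2})$, yielding rapid decay that is more than sufficient. Thus $g \in L^2(\mathbf{R}) \subset L^2(\Sigma)$.

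The main obstacle is justifying the deformation of $\Gamma$ to $\mathbf{R}$ rigorously, particularly across the corner of $\gamma$ at $k = \tfrac{1}{4}$, where the asymptotic formulas for $\omega_\pm$ do not yet dominate. Once the integrand is shown to be analytic and suitably bounded in the strip between $\Gamma$ and $\mathbf{R}$ (a verification that relies on the placement of the branch cuts identified in Figure \ref{locicuts}), the remainder of the argument reduces to the Fresnel scaling for (a) and a single application of Plancherel for (b); the finite portion of the contour contributes smooth and bounded terms on $\Sigma$ that do not affect either conclusion.
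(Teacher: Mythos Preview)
Your overall strategy---substituting $u=\omega_-(k)$, deforming the image contour to $\mathbf R$, and reading $K$ and $g$ as ordinary Fourier transforms---is genuinely different from the paper's proof. The paper keeps $k$ as the integration variable throughout: it splits $\gamma$ into a compact arc (on which the integrand is uniformly bounded) and two unbounded tails, then rotates the tails onto the rays $\arg k=\pi/4$ and $\arg k=-3\pi/4$, where $|e^{-i\omega_-(k)t}|$ becomes a true Gaussian $e^{-r^2 t}$. A direct Gaussian integral then yields $|K(t)|\le M\,t^{-1/2}$ for (a) and the pointwise bound $|g(t)|\le C\log t$ for (b). No inversion of $\omega_-$ and no Plancherel are used, so the ``main obstacle'' you identify (analytic continuation of $k(u)$ between $\Gamma$ and $\mathbf R$, and injectivity of $\omega_-$ on $D$) simply never arises.

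There is a real gap in your treatment of (b). At this point in the paper the standing hypothesis is $\eta_0\in H^2_{\langle 2\rangle}(\mathbf R^+)$; the upgrade to $H^3_{\langle 3\rangle}$ enters only in the final well-posedness proposition, precisely because there one needs $g\in H^1_{\dd t}(\Sigma)$ rather than merely $g\in L^2_{\dd t}(\Sigma)$. Your Plancherel estimate $\|g\|_{L^2}^2\lesssim\int k^5|\hat\eta_0(k)|^2\,\dd k$ genuinely requires that extra derivative, so it does not prove the lemma under the hypothesis actually in force. The paper's pointwise route delivers $g\in L^2_{\dd t}(\Sigma)$ already under $H^2_{\langle 2\rangle}$.

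A smaller correction in (a): the factor $-iu$ produced by $\partial_t$ \emph{worsens} the decay of the integrand (from $|u|^{-1/2}$ to $|u|^{1/2}$), so it cannot be what justifies differentiation under the integral sign. Smoothness of $K(t)$ for $t>0$ comes instead from oscillation (integration by parts in $u$, or stationary phase); in the paper's framework it is immediate because the Gaussian tails are manifestly smooth for $t>0$ and the compact-arc contribution is analytic in $t$.
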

\begin{proof}
For $(a)$ it is enough to prove:
\[ | K(t,\tau) | \leq M | t-\tau |^{-\alpha} \]
for some $\alpha \in (0,1)$. We show this as follows: choose $\delta>1$ such that the boundary $\partial B_\delta$, where $B_{\delta} \equiv B_\delta (0)$, lies beyond the branch cut between the complex pair of roots of $k^3+k^2-U^2=0$. The contribution from within $B_\delta$ is certainly uniformly bounded, since $\gamma$ has been chosen to avoid the singularities of the relevant functions. This gives us the following inequality:
\[ \left| K(t,\tau)\right| < c_0 + \left| \int_{\gamma \setminus (\gamma \cap B_\delta)} e^{-i\omega_- (k)(t-\tau)}\, \dd \mu \right|. \]
Now split the contour into $\gamma_1$ and $\gamma_2$ which constitute the disjoint elements of $\gamma \setminus (\gamma \cap B_\delta)$: the first is parallel with the negative imaginary axis, the second is parallel with the positive real axis. Choosing $\delta >0$ sufficiently large, we can use the result in Proposition \ref{propmain} so that:
\[ \left|\omega_- (k) + k^2 - \tfrac{1}{2} k - U + \tfrac{3}{8}\right| < \epsilon \]
on $\gamma_1$ and $\gamma_2$. Noting that the integrand decays exponentially in the 1st and 3rd quadrants, we deform the $\gamma_1$ and $\gamma_2$ onto the rays $\mathrm{arg} (k) = \tfrac{\pi}{4}$ and $\mathrm{arg}(k) = - \tfrac{3\pi}{4}$ respectively, picking up a contribution from the relevant parts of $\partial B_\delta$. These additional contributions will also be uniformly bounded. Denoting the partial rays by $\Gamma_1$ and $\Gamma_2$ respectively gives:
\[ \left| \int_{\gamma_1} e^{-i\omega (k)(t-\tau)}\, \dd\mu\right | < c_1 + \int_{\Gamma_1}|e^{-i\omega_- (k)(t-\tau)}|\, |\dd \mu|. \]
Once again using the result in Proposition \ref{propmain} we have:
\begin{align*}
\int_{\Gamma_1}|e^{-i\omega_- (k)(t-\tau)}|\, |\dd \mu| &\leq \int_{\delta}^\infty e^{\epsilon (t-\tau)} e^{(-r^2 + 2r)(t-\tau)}\left ( \epsilon + 2\right)\, \dd r \\
&\leq c_2 e^{(\epsilon+1) (t-\tau)} \int_1^\infty e^{-(r-1)^2(t-\tau)}\, \dd r \\
&\leq c_3 \frac{e^{(\epsilon+1) (t-\tau)}}{\sqrt{(t-\tau)}} \\
&< c_4 |t-\tau|^{-1/2}.
\end{align*}
Using an entirely analogous argument for the contribution from $\Gamma_2$ we find:
\[ \int_{\Gamma_2}|e^{-i\omega_- (k)(t-\tau)}|\, |\dd \mu| <c_5 |t-\tau|^{-1/2}. \]
Each of the previous bounds implies the following result:
\begin{align*} |K(t,\tau)| & \leq c_6 + c_7|t-\tau|^{-1/2} \\
 &= \left(c_6|t-\tau|^{1/2} + c_7\right) |t-\tau|^{-1/2} \\
&< M |t-\tau|^{-1/2}. 
\end{align*}
Hence $K$ has a weak singularity with exponent $\alpha = \tfrac{1}{2}$.

For $(b)$, we proceed as in the previous case by splitting the integral. Noting the asymptotic result in Proposition \ref{propmain}, for $\delta$ sufficiently large, we have:
\[ \left| \frac{c_-(k)}{\alpha(k)}\right| <\frac{\epsilon}{|k|} \]
for $k\in \gamma\setminus (\gamma \cap B_\delta)$, where $\epsilon >0$. This gives:
\begin{align*} \left| \int_{\gamma\setminus (\gamma\cap B_\delta)} \frac{ c_- (k)}{\alpha(k)}  e^{-i\omega_-(k)t}\, \dd \mu\right| &\leq \epsilon \left| \int_{\gamma\setminus (\gamma\cap B_\delta)} \frac{ e^{-i\omega_-(k)t}}{|k|}\, \dd \mu\right| \\
&\leq c_1 \int_2^\infty r^{-1} e^{-(r-1)^2t}\, \dd r \\
&< c_2 \log t 
\end{align*}
where we have used similar estimates as earlier. Note also the singularities of the integrand lie off $\gamma$ by construction, so the contribution from the integral along $\gamma\cap B_\delta$ is uniformly bounded and we have:
\[ \left| \int_\gamma \frac{c_-(k)}{\alpha(k)} e^{-i\omega_-(k)t}\, \dd \mu \right| < c_3 \log t. \]
It then follows that for $\Sigma$ bounded, we have $\|g\|_{L^2_{\dd t}(\Sigma)} < \infty$.
\end{proof}
\begin{corollary}
The operator $K: L^2_{\dd t}(\Sigma) \rightarrow L^2_{\dd t}(\Sigma)$ is compact.
\end{corollary}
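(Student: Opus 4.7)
The plan is to realize $K$ as the norm limit of a sequence of compact operators, invoking the fact that the uniform limit (in operator norm) of compact operators on a Banach space is compact. The weak singularity exponent $\alpha=\tfrac{1}{2}$ established in Lemma \ref{integralop} makes the kernel locally $L^1$, and this is exactly what is needed to control the approximation.

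Concretely, for each $n\in\mathbb{N}$ I would define a truncated operator
\[ K_n f(t) \defn \int_{0}^{t-1/n} K(t-\tau) f(\tau)\, \dd \tau, \]
with the convention that the integral is zero when $t<1/n$. The kernel of $K_n$ is $K(t-\tau)\mathbf{1}_{\{t-\tau\geq 1/n\}}$, which is bounded on $\Sigma\times\Sigma$ by $M(1/n)^{-1/2}$ thanks to part (a) of Lemma \ref{integralop}, and hence lies in $L^2(\Sigma\times\Sigma)$ since $\Sigma$ has finite measure. Therefore $K_n$ is Hilbert--Schmidt and in particular compact on $L^2_{\dd t}(\Sigma)$.

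Next I would estimate $\|K-K_n\|$ using Young's convolution inequality. For $f\in L^2_{\dd t}(\Sigma)$, extending $f$ by zero outside $\Sigma$, we have
\[ |(K-K_n)f(t)| \leq \int_{\max(0,t-1/n)}^{t} |K(t-\tau)|\, |f(\tau)|\, \dd\tau \leq (|K|\mathbf{1}_{[0,1/n]} \ast |f|)(t), \]
so Young's inequality gives
\[ \|(K-K_n)f\|_{L^2_{\dd t}(\Sigma)} \leq \bigl\||K|\mathbf{1}_{[0,1/n]}\bigr\|_{L^1} \|f\|_{L^2_{\dd t}(\Sigma)} \leq \int_0^{1/n} M s^{-1/2}\, \dd s \cdot \|f\|_{L^2_{\dd t}(\Sigma)} = \frac{2M}{\sqrt{n}}\|f\|_{L^2_{\dd t}(\Sigma)}. \]
Hence $\|K-K_n\|_{L^2\to L^2}\leq 2M/\sqrt{n}\to 0$ as $n\to\infty$. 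Since the space of compact operators on a Hilbert space is norm-closed, the limit operator $K$ is compact.

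There is no real obstacle here: the bound $|K(t,\tau)|\leq M|t-\tau|^{-1/2}$ from Lemma \ref{integralop} does all the work, and the argument is a standard one for Volterra operators with weakly singular kernels. The only point requiring a little care is making sure Young's inequality is applied correctly despite the Volterra (rather than full convolution) structure, which is handled by the zero-extension above.
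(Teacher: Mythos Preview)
Your proof is correct and self-contained, but it takes a different route from the paper. The paper first establishes that $K$ is bounded on $L^2_{\dd t}(\Sigma)$ via a Schur-type estimate: writing $|K(t,\tau)|^{1/2}|K(t,\tau)|^{1/2}|f(\tau)|$ and applying Cauchy--Schwarz together with the integrability of the weak singularity, one gets $\|Kf\|_{L^2}\leq M_2\|f\|_{L^2}$. Compactness is then obtained by citing a standard reference (Kantorovich--Akilov) for integral operators with weakly singular kernels. Your argument instead builds the compactness explicitly by approximating $K$ in operator norm by Hilbert--Schmidt truncations $K_n$ and using Young's inequality to control $\|K-K_n\|$. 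The advantage of your approach is that it is fully self-contained and does not appeal to an external black box; the paper's approach, on the other hand, records the boundedness estimate separately, which is itself of some independent use later in the discussion of well-posedness.
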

\begin{proof}
$K$ has a weak singularity, hence it is bounded in $L^2_{\dd t}(\Sigma)$. Indeed, take $f \in L^2_{\dd t}(\Sigma)$, then the previous result gives:
\begin{align*}|(Kf)(t)|^2 &\leq \left|\int_{\Sigma}|K(t,\tau)|^{1/2} |K(t,\tau)|^{1/2} |f(\tau)|\, \dd \tau \right|^2 \\
&\leq \left| \int_\Sigma |K(t,\tau)||f(\tau)|^2\, \dd \tau\right| \left| \int_{\Sigma}|K(t,\tau)|\, \dd \tau\right| \\
&\leq M_1\left| \int_\Sigma |K(t,\tau)||f(\tau)|^2\, \dd \tau\right|,
\end{align*}
which follows from the Cauchy-Schwarz inequality and the fact that a weak singularity is integrable. Thus,
\begin{align*}
\|Kf\|_{L^2_{\dd t}(\Sigma)}^2 &\leq M_1\int_{\Sigma \times \Sigma} |K(t,\tau)||f(\tau)|^2\, \dd \tau\, \dd t \\
&\leq M_2 \|f\|^2_{L^2_{\dd t}(\Sigma)}
\end{align*}
where we have used Fubini's theorem. Thus, standard results (see for example \cite{kantorovich1964fan}), imply the desired result.
\end{proof}
\begin{remark}
The estimates of the above lemmas are not sufficient to handle the case $T=\infty$, i.e. $\Sigma=(0,\infty)$, since in thise case the contributions from $\gamma \cap B_\delta$ fail to decay rapidly for large $t$. This underlines the fact that instabilities could grow exponentially in time and hence analysis on $L^2_{\dd t}(\mathbf{R}^+)$ is inappropriate. The lemma also indicates the importance of the conditions on the curvature of the plate at the hinge, since without them we cannot require that $g\in L^2_{\dd t}(\mathbf{R}^+)$.
\end{remark}
Since the integral operator $K$ is of the convolution type, it follows that \eqref{Fredholm} can be solved in a straightforward manner using Laplace's transform. In this respect the following estimates, which can be derived using similar methodology to that used in lemma \ref{integralop}, are essential.
\begin{lem}
Given $g$ and $K$ as in Proposition \ref{propmain}, $\exists \alpha>0$ such that $e^{-\alpha t}g\in L^1_{\dd t}(\R^+)$ and $e^{-\alpha t} K \in L^1_{\dd t} (\R^+)$.
\end{lem}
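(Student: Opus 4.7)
The plan is to extract explicit exponential growth rates for $K(t)$ and $g(t)$ from the contour estimates already developed in Lemma \ref{integralop}, and then to choose $\alpha$ strictly larger than these rates so the weighted functions become integrable. In Lemma \ref{integralop} the factor $e^{(1+\epsilon)(t-\tau)}$ that emerges from the contour deformation was absorbed into a $t,\tau$-independent constant because $\Sigma$ was bounded; to treat $\Sigma=\R^+$ I will instead retain that factor explicitly.

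First, I would decompose $\gamma=(\gamma\cap B_\delta)\cup\gamma_1\cup\gamma_2$ exactly as in Lemma \ref{integralop}, choosing $\delta$ large enough that the asymptotic estimate of Proposition \ref{propmain} is valid on $\gamma_1,\gamma_2$ and $B_\delta$ encloses all branch points. On the compact arc $\gamma\cap B_\delta$ the measure $\dd\mu$ and the prefactors $c_-(k)/\alpha(k)$ are uniformly bounded (by construction $\gamma$ avoids the singularities), and $|e^{-i\omega_-(k)t}|=e^{t\Im \omega_-(k)}$, so this piece contributes at most $M_0 e^{\beta_0 t}$, where
\[ \beta_0\defn \sup_{k\in\gamma\cap B_\delta}\Im \omega_-(k)<\infty. \]
Second, on the unbounded tails I rerun the deformation of Lemma \ref{integralop} onto the rays $\arg k=\pi/4$ and $\arg k=-3\pi/4$. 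Retaining the exponential factor one obtains the tail bounds
\[ |K(t)|\leq M_0 e^{\beta_0 t}+C_1\frac{e^{(1+\epsilon)t}}{\sqrt{t}},\qquad |g(t)|\leq M_0' e^{\beta_0 t}+C_2\frac{e^{(1+\epsilon)t}}{\sqrt{t}}, \]
valid for $t>0$ and arbitrarily small $\epsilon>0$; the extra $O(1/|k|)$ factor $c_-(k)/\alpha(k)$ identified in Lemma \ref{integralop}(b) gives the analogue for $g$, possibly with a harmless logarithmic correction near $t=0$ that is already $L^1_{\mathrm{loc}}$.

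Third, pick any $\alpha>\max(\beta_0,1+\epsilon)$. Then the two elementary Gamma-type integrals
\[ \int_0^\infty e^{-(\alpha-\beta_0)t}\,\dd t=\frac{1}{\alpha-\beta_0},\qquad \int_0^\infty \frac{e^{-(\alpha-1-\epsilon)t}}{\sqrt{t}}\,\dd t=\sqrt{\frac{\pi}{\alpha-1-\epsilon}} \]
are both finite, and combining them with the pointwise bounds above shows that $e^{-\alpha t}K$ and $e^{-\alpha t}g$ belong to $L^1_{\dd t}(\R^+)$.

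The main obstacle is not the tails, which are handled by exactly the same Gaussian-completion-of-squares estimate as in Lemma \ref{integralop}, but rather the bounded piece $\gamma\cap B_\delta$: one must verify that $\beta_0<\infty$, i.e.\ that $\Im\omega_-$ remains bounded on this compact arc. This reduces to checking that $\gamma$ stays a uniform distance away from the branch points of $\omega_\pm$, which is the entire purpose of the $\Re k=\tfrac{1}{4}$ indentation in the definition of $\gamma$. Once $\beta_0$ is known to be finite the choice of $\alpha$ is then simply any number exceeding $\max(\beta_0,1+\epsilon)$, and the same $\alpha$ serves for both $g$ and $K$.
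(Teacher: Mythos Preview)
Your proposal is correct and is precisely the argument the paper has in mind: the paper does not actually supply a proof of this lemma, stating only that the estimates ``can be derived using similar methodology to that used in lemma \ref{integralop}''. Your write-up is a faithful expansion of that remark---you reuse the same decomposition $\gamma=(\gamma\cap B_\delta)\cup\gamma_1\cup\gamma_2$, the same deformation onto the rays $\arg k=\tfrac{\pi}{4},-\tfrac{3\pi}{4}$, and the same Gaussian tail estimate, the only change being that you retain the factor $e^{(1+\epsilon)t}$ rather than absorbing it into a constant, and then choose $\alpha>\max(\beta_0,1+\epsilon)$.
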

The solution of \eqref{Fredholm} is unique provided that $g \in \mathrm{dom}(K^{-1})$. The compactness of $K$ means that $K^{-1}$ is unbounded and hence necessarily discontinuous on $L^2_{\dd t}(\Sigma)$. This fact suggests the existence of a sequence of initial data, $\{\eta_0^n\} \in H^2_{\langle 2\rangle}(\mathbf{R}^+)$, such that the corresponding sequence $g_n\in L^2_{\dd t}(\Sigma)$, defined by:
\[ g_n(t) = \int_\gamma \left[ \frac{\tilde{c}_-(k)}{\alpha(k)}\right] \hat{\eta}_0^n(k)e^{-i\omega_-(k)t}\, \dd \mu, \]
has the following two properties:
\begin{align*}
g_n &\rightarrow 0, \\
\|K^{-1}g_n\|_{L^2_{\dd t}(\Sigma)} &\rightarrow M >0.
\end{align*}
Now since $\hat{\eta}(k,t)$ has non-pathological dependence on $\eta_{xxx}(0,t)$, and hence $K^{-1}g$, it would follow that $\hat{\eta}(k,t)$ would change discontinuously with the initial data. Hence in this case the problem would be ill-posed in $L^2_{\dd x}(\mathbf{R}^+)$.

We now perform a regularisation of the problem by modifying the relevant function spaces so that $K$ is continuously invertible, which will ensure well-posedness. A key result, central to our argument, is the following well-known lemma.
\begin{lem}\label{tik}
Let $X,Y$ be Banach spaces and let $X_c \subset X$ be compact. Then if $T:X_c \rightarrow Y$ is continous and one-to-one, then $T^{-1}:\mathcal{R}(X_c)\rightarrow X_c$ exists and is continuous.
\end{lem}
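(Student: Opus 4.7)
The plan is to invoke the classical topological fact that a continuous injection from a compact space into a Hausdorff space is a homeomorphism onto its image, specialised here to the setting of Banach spaces where compact subsets are automatically metrisable and Hausdorff. Existence of $T^{-1}$ on $\mathcal{R}(X_c) = T(X_c)$ is immediate from injectivity, so the whole content is the continuity of $T^{-1}$.

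I would prove continuity sequentially, which is natural since $X_c$, being a compact subset of a Banach space, is sequentially compact. Take an arbitrary sequence $y_n \to y$ in $\mathcal{R}(X_c)$ and write $x_n = T^{-1}(y_n) \in X_c$ and $x = T^{-1}(y) \in X_c$. The goal is to show $x_n \to x$ in $X$. I would argue by a subsequence-of-every-subsequence argument: given any subsequence $\{x_{n_k}\}$, compactness of $X_c$ yields a further subsequence $\{x_{n_{k_j}}\}$ converging to some $x^* \in X_c$. Continuity of $T$ gives $T(x_{n_{k_j}}) \to T(x^*)$, while by construction $T(x_{n_{k_j}}) = y_{n_{k_j}} \to y = T(x)$. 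Hence $T(x^*) = T(x)$, and injectivity of $T$ forces $x^* = x$. Thus every subsequence of $\{x_n\}$ has a further subsequence converging to $x$, which is the standard criterion for $x_n \to x$.

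There is no real obstacle; the only point to be careful about is that the argument uses sequential compactness rather than merely closedness of $X_c$, but this is free since $X_c$ is a compact subset of a metric space. I would also note that the alternative closed-sets proof, showing that $T$ maps closed subsets of $X_c$ to closed subsets of $Y$ (closed in $X_c$ implies compact, so the image is compact and hence closed in the Banach space $Y$), gives the result equally quickly and makes transparent why Hausdorffness of the target is the essential structural hypothesis. Either route suffices, and neither requires any linear structure beyond what the ambient Banach spaces automatically provide.
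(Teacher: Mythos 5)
Your proof is correct, but note that the paper itself offers \emph{no} proof of this lemma: it is introduced as a ``well-known lemma'' (it is essentially Tikhonov's classical lemma from the theory of ill-posed problems, whence the label) and is invoked without justification, so there is no internal argument to compare yours against. Both routes you sketch are complete and standard. The subsequence-of-every-subsequence argument is sound: $X_c$ is a compact subset of a metric space, hence sequentially compact, and sequential continuity of $T^{-1}$ suffices for continuity since all spaces involved are metric; the chain $T(x_{n_{k_j}})=y_{n_{k_j}}\to y=T(x)$, uniqueness of limits, and injectivity correctly force every subsequential limit to equal $x$. The closed-map alternative (closed in $X_c$ implies compact, so its image is compact, hence closed in $Y$ and a fortiori closed in the subspace $T(X_c)$) is equally valid and, as you say, isolates Hausdorffness of the target as the real hypothesis. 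One minor point of notation: the paper's $\mathcal{R}(X_c)$ should be read as the range $T(X_c)$ of the restricted map, which is exactly the identification you make.
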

Since $K:L^2_{\dd t}\rightarrow L^2_{\dd t}$ is compact, it is necessarily bounded (continuous), so it follows immediately from lemma \ref{tik} that if we choose $X_c$ to be compactly embedded in $L^2_{\dd t}(\Sigma)$ such that $K|_{X_c}$ is one-to-one, then the following map a homeomorphism:
\[ K|_{X_c}:X_c \rightarrow \mathcal{R}(K|_{X_c}), \]
where $K|_{X_c}$ denotes the restriction of $K$ to $X_c$. We now prove that $K|_{X_c}$ is one-to-one.
\begin{lem}\label{one2one}
The integral operator $K|_{X_c}:X_c \rightarrow L^2_{\dd t}(\Sigma)$ is one-to-one.
\end{lem}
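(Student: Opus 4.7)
The plan is to show that the Volterra convolution $K$ is injective on all of $L^2_{\dd t}(\Sigma)$ (a fortiori on any compactly embedded $X_c$). Writing $\Sigma = (0,T)$, suppose $f \in L^2_{\dd t}(\Sigma)$ satisfies $\int_0^t K(t-\tau) f(\tau)\,\dd \tau = 0$ for almost every $t \in (0,T)$. The natural tool is Titchmarsh's convolution theorem: if locally integrable $u,v$ on $\mathbf{R}^+$ satisfy $(u*v)(t) = 0$ a.e.\ on $[0,T]$, then there exist $\alpha,\beta \geq 0$ with $\alpha + \beta \geq T$ such that $u$ vanishes on $[0,\alpha]$ and $v$ vanishes on $[0,\beta]$. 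Applied with $u = K$ and $v = f$, the lemma reduces to showing that $0$ lies in the essential support of $K$, i.e.\ that $K(t)$ does not vanish throughout any right neighborhood of $0$.

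To verify that nonvanishing, I would extract the leading-order asymptotic of $K(t)$ as $t \to 0^+$. From Proposition~\ref{propmain} the dominant behavior of $\omega_-(k)$ along $\gamma$ is $-k^2 + \tfrac{1}{2}k + \cdots$, with $\omega_-'(k) \sim -2k$, so $\dd\mu = \tfrac{1}{k}\dd \omega_- \sim -2\,\dd k$ for large $|k|$. After deforming the large-$|k|$ portion of $\gamma$ onto the ray $\arg k = \pi/4$ (as in the proof of Lemma~\ref{integralop}), the contribution behaves like a rescaled Gaussian integral and yields $K(t) \sim c\, t^{-1/2}$ as $t \to 0^+$, with an explicitly computable nonzero constant $c$; the remaining (bounded) portions of $\gamma$ contribute only $O(1)$. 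Hence $K(t) \neq 0$ on some $(0,\delta)$ and Titchmarsh gives $f \equiv 0$.

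A technically equivalent and slightly cleaner route uses the Laplace transform. By the preceding lemma $e^{-\alpha t}K \in L^1_{\dd t}(\mathbf{R}^+)$, so after interchanging orders of integration
\[
 \tilde{K}(s) = \int_\gamma \frac{\dd \mu(k)}{s + i\omega_-(k)}
\]
is analytic in $\{\Re s > \alpha\}$. Extending $f$ by zero past $T$, the equation $Kf = 0$ on $\Sigma$ translates to $\tilde{K}(s)\tilde{f}(s) \equiv 0$ on that half-plane. The same asymptotic input $\omega_-(k) \sim -k^2$ yields $\tilde{K}(s) \sim c/\sqrt{s}$ as $\Re s \to +\infty$, so in particular $\tilde{K} \not\equiv 0$; analyticity then forces $\tilde{f} \equiv 0$ and hence $f = 0$ in $L^2_{\dd t}(\Sigma)$.

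The main obstacle is precisely the nonvanishing assertion $K(t) \not\equiv 0$ near $t = 0$ (equivalently $\tilde{K} \not\equiv 0$). Because $\dd\mu$ is complex-valued on $\gamma$, one must rule out a delicate oscillatory cancellation between the vertical and horizontal pieces of the contour. I expect this to go through because, after the contour deformation, the quadratic phase $e^{ik^2 t}$ along $\arg k = \pi/4$ produces a single non-degenerate Gaussian saddle, and the compact piece of $\gamma$ near $k = \tfrac14$ contributes only a bounded (hence subdominant) term as $t \to 0^+$, so no competing contribution can cancel the leading $t^{-1/2}$ singularity.
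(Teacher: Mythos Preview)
Your primary route---Titchmarsh's convolution theorem plus a verification that $K$ does not vanish identically near $t=0$---is exactly the paper's argument. The one difference is in how the nonvanishing of $K$ is obtained: you work for it via the $t\to 0^+$ asymptotic $K(t)\sim c\,t^{-1/2}$, whereas the paper simply observes that $t\mapsto K(t)$ is analytic on $\Sigma$, so its zero set has measure zero and the Titchmarsh parameter $t_2$ must be $0$. That one-line analyticity remark replaces your contour deformation and saddle-point computation entirely, and sidesteps the cancellation worry you flag at the end.

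Your Laplace-transform alternative, however, has a genuine gap. From $Kf=0$ on $\Sigma=(0,T)$ you cannot pass to $\tilde{K}(s)\tilde{f}(s)\equiv 0$: extending $f$ by zero beyond $T$ gives $\widetilde{K\ast f}(s)=\tilde{K}(s)\tilde{f}(s)$, but $(K\ast f)(t)$ need not vanish for $t>T$, so its Laplace transform is not forced to be identically zero. The Laplace argument would require $Kf=0$ on all of $\mathbf{R}^+$, which you do not have. Stick with the Titchmarsh route.
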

\begin{proof}
It suffices to prove that $\mathcal{N}(K|_{X_c})=\{0\}$ since $K|_{X_c}$ is linear. Suppose $\theta \in \mathcal{N}(\Sigma)$, so that:
\begin{equation} K|_{X_c} \theta = 0. \label{ker} \end{equation}
Since $K|_{X_c}$ is of the convolution type equation \eqref{ker} can be written as:
\begin{equation} (K * \theta)(t)=0, \qquad t\in\Sigma \label{ker2} \end{equation}
where $K(t)=\int_\gamma e^{-i\omega_-(k) t} \dd \mu$. Now it follows from Titchmarsh's convolution theorem that $\theta=0$ almost everywhere in $(0,t_1)$ and $K=0$ almost everywhere in $(0,t_2)$ where, $t_1 + t_2\geq T$. The operator $K$ defines an analytic function of $t$ for $t\in \Sigma$, and as such the zero set for $K$ has measure zero. Consequently $\theta=0$ almost everywhere in $\Sigma$ and we conclude that $\mathcal{N}(K|_{X_c}) =\{0\}$, so $K|_{X_c}$ is one-to-one.
\end{proof}
\begin{corollary}
The map $K|_{X_c}:X_c \rightarrow \mathcal{R}(K|_{X_c})$ is a homeomorphism.
\end{corollary}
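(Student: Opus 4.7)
The plan is to obtain the corollary as an immediate consequence of Lemma \ref{tik} applied to $K$, using three ingredients that have all been established upstream: (i) $K$ is compact, and a fortiori bounded, on $L^2_{\dd t}(\Sigma)$ (by the preceding corollary, which rested on the weak-singularity estimate of Lemma \ref{integralop}); (ii) $X_c$ is, by its very construction, compactly embedded in $L^2_{\dd t}(\Sigma)$; and (iii) $K|_{X_c}$ is one-to-one by Lemma \ref{one2one}.

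First I would observe that $K|_{X_c}:X_c \to L^2_{\dd t}(\Sigma)$ is continuous: the inclusion $\iota: X_c \hookrightarrow L^2_{\dd t}(\Sigma)$ is bounded (indeed compact), and $K$ is a bounded operator on $L^2_{\dd t}(\Sigma)$, so $K|_{X_c}=K\circ\iota$ is a composition of bounded linear maps. Combined with (iii), this makes $K|_{X_c}$ a continuous linear bijection onto its range $\mathcal{R}(K|_{X_c})\subset L^2_{\dd t}(\Sigma)$. At this point Lemma \ref{tik} applies verbatim, with both Banach spaces taken to be $L^2_{\dd t}(\Sigma)$ and $X_c$ as the compactly embedded subspace in the role of the compact set: it yields existence and continuity of $(K|_{X_c})^{-1}:\mathcal{R}(K|_{X_c})\to X_c$. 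Continuity in both directions is precisely the statement that $K|_{X_c}$ is a homeomorphism onto its range.

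There is essentially no obstacle left to overcome; the regularisation introduced just before Lemma \ref{one2one} was engineered specifically so that Lemma \ref{tik} could be invoked. The only detail worth flagging in writing out the argument is that the topology on $X_c$ used here must match the one referenced by Lemma \ref{tik}, i.e. $X_c$ carries its own Banach space norm under which the embedding into $L^2_{\dd t}(\Sigma)$ is compact; this is built into the definition of $X_c$ and is the whole point of passing to a smaller function space. Given this, the argument above delivers the homeomorphism claim in a single application of the three prior results.
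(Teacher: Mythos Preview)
Your proposal is correct and matches the paper's approach exactly: the corollary is stated without proof in the paper, the argument having been given in the paragraph preceding Lemma~\ref{one2one}, namely that $K$ is continuous, $X_c$ is compactly embedded in $L^2_{\dd t}(\Sigma)$, and $K|_{X_c}$ is one-to-one, so Lemma~\ref{tik} applies. Your write-up simply spells out this reasoning, including the observation that $K|_{X_c}=K\circ\iota$ inherits continuity from the boundedness of $K$ and of the inclusion.
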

Now recall the classical result due to Rellich and Kondrachov, which states that $H^1_{\dd t}(\Sigma)$ is compactly embedded in $L^2_{\dd t}(\Sigma)$, so our previous discussion reveals that the map:
\[ K|_{H^1_{\dd t}(\Sigma)} : H^1_{\dd t}(\Sigma) \rightarrow \mathcal{R}(K|_{H^1_{\dd t}(\Sigma)}) \]
is continously invertible. It now suffices to choose $\eta_0$ so that $g \in \mathcal{R}(K|_{H^1_{\dd t}(\Sigma)})$, where $g$ is defined in Propostion \ref{propmain}. In fact, we can simply choose $\eta_0$ so that $g \in H^1_{\dd t}(\Sigma)$ since the Fredholm alternative theorem implies:
\[ \mathcal{R}(K|_{H^1_{\dd t}(\Sigma)}) = \mathcal{N}(K|_{H^1_{\dd t}(\Sigma)}^*)^{\perp}  \]
where the $K^*$ denotes the adjoint operator. Since $K|_{H^1_{\dd t}(\Sigma)}^*$ is also a convolution operator, a similar argument to that used in the proof of Lemma \ref{one2one} shows that $\mathcal{N} (K|_{H^1_{\dd t}(\Sigma)}^*)=\{0\}$, so we conclude that:
\[ \mathcal{R}(K|_{H^1_{\dd t}(\Sigma)}) = H^1_{\dd t}(\Sigma). \]
For classical solutions we also require $g(0)=0$. We can now give sufficient conditions of $\eta_0$ so IBVP in \eqref{half_line_eq} well posed.
\begin{prop}
If $\eta_0 \in H^3_{\langle 3\rangle }(\mathbf{R}^+)$, then $g \in H^1_{\dd t}(\Sigma)$ and the IBVP \eqref{half_line_eq} is well posed, i.e the solution defined by $\eta_0 \mapsto \eta$ defines a continous map from $H^3_{\langle 3\rangle}(\mathbf{R}^+)$ to $L^2_{\dd x}(\mathbf{R}^+)$.
\end{prop}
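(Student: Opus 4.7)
The plan is to establish the two claims in sequence: first, $\eta_0 \in H^3_{\langle 3\rangle}(\mathbf{R}^+)$ implies $g \in H^1_{\dd t}(\Sigma)$; second, continuity of the chain of linear maps $\eta_0 \mapsto g \mapsto (\eta_{xx}(0,\cdot),\eta_{xxx}(0,\cdot)) \mapsto \hat{\eta}(\cdot,t) \mapsto \eta(\cdot,t)$ into the promised target norms.

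First I would verify $g \in H^1_{\dd t}(\Sigma)$. Using the continuous extension $E:H^3_{\langle 3\rangle}(\mathbf{R}^+)\to H^3_{\dd x}(\mathbf{R})$ together with the vanishing conditions $\eta_0(0)=\eta_0'(0)=\eta_0''(0)=\eta_0'''(0)=0$, three successive integrations by parts in $\hat{\eta}_0(k) = \tfrac{1}{\sqrt{2\pi}}\int_0^\infty e^{-ikx}\eta_0(x)\,\dd x$ give $\hat{\eta}_0(k) = (ik)^{-3}\widehat{E\eta_0'''}(k)$, so that $|k|^3 \hat{\eta}_0 \in L^2_{\dd k}(\mathbf{R})$ with norm controlled by $\|\eta_0\|_{H^3_{\langle 3\rangle}}$. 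This improves the estimate $\tfrac{c_-(k)}{k\alpha(k)}\omega_-'(k) = O(1/k)$ of Proposition \ref{propmain} (which used only $H^2_{\langle 2\rangle}$) by one extra factor of $1/k$. Differentiating under the integral in the definition of $g(t)$ brings down $-i\omega_-(k)=O(k^2)$, and the improved decay keeps the resulting integrand $O(1)$ on the tails of $\gamma$. Deforming $\gamma_1\to\Gamma_1$ and $\gamma_2\to\Gamma_2$ as in the proof of Lemma \ref{integralop} and reapplying the estimates there yields $\partial_t g \in L^2_{\dd t}(\Sigma)$ with $\|\partial_t g\|_{L^2_{\dd t}(\Sigma)}\lesssim \|\eta_0\|_{H^3_{\langle 3\rangle}}$. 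The compatibility $g(0)=0$ follows because the integrand defining $g(0)$ is analytic in $\mathcal{D}_4$ and, thanks to the improved $|k|^{-2}$ decay, the contour can be closed at infinity, so the integral vanishes by Cauchy's theorem.

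Next I would invoke the Fredholm machinery already assembled. The preceding discussion shows $K|_{H^1_{\dd t}(\Sigma)}:H^1_{\dd t}(\Sigma)\to H^1_{\dd t}(\Sigma)$ is a homeomorphism (compact embedding $H^1\hookrightarrow L^2$, Lemma \ref{one2one}, Fredholm alternative). Hence $g\in H^1_{\dd t}(\Sigma)$ determines $\eta_{xxx}(0,\cdot)\in H^1_{\dd t}(\Sigma)$ uniquely and continuously, and then $\eta_{xx}(0,\cdot) = (2\pi i)^{-1}\bigl(g - K*\eta_{xxx}(0,\cdot)\bigr)\in H^1_{\dd t}(\Sigma)$ inherits continuous dependence on $g$. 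Composition gives a bounded linear map $\eta_0\mapsto(\eta_{xx}(0,\cdot),\eta_{xxx}(0,\cdot))$ from $H^3_{\langle 3\rangle}(\mathbf{R}^+)$ into $H^1_{\dd t}(\Sigma)^2$.

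Finally I would convert this into a bound on $\eta(\cdot,t)$ via the representation \eqref{soln}. The piece $c_\pm(k)e^{-i\omega_\pm t}$ is handled exactly as in Proposition \ref{firstwellposed} and costs two Sobolev derivatives of $\eta_0$. For the remaining piece $\alpha(k) F_t(\omega_\pm, k) e^{-i\omega_\pm t}$, the asymptotic $\alpha(k)=O(1/k^2)$, Cauchy--Schwarz in $\tau$ giving $|F_t(\omega_\pm,k)|\lesssim t^{1/2}(1+|k|)(\|\eta_{xxx}(0,\cdot)\|_{L^2_{\dd t}(\Sigma)}+\|\eta_{xx}(0,\cdot)\|_{L^2_{\dd t}(\Sigma)})$, and the uniform-in-$k$ boundedness of $\alpha, F_t$ on $k\in[0,1]$, combine to show the $F_t$-contribution lies in $L^2_{\dd k}(\mathbf{R}^+)$ uniformly for $t\in[0,T]$. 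Assembling and using Parseval gives $\|\eta(\cdot,t)\|_{L^2_{\dd x}(\mathbf{R}^+)}\lesssim \|\eta_0\|_{H^3_{\langle 3\rangle}(\mathbf{R}^+)}$ with a $t$-uniform constant, and linearity upgrades this to continuity of $S_t$. The main obstacle will be Step 1: proving $\partial_t g\in L^2$ requires tracking the delicate balance between the $|k|^2$ growth injected by the $t$-derivative, the improved $|k|^{-3}$ decay of $\hat{\eta}_0$, and the exponential decay gained from deforming contours into the sectors where $\Im\omega_-(k)<0$, and identifying $g(0)=0$ as precisely the algebraic consequence of the new boundary condition $\eta_0'''(0)=0$.
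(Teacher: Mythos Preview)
Your proposal follows essentially the same route as the paper's own sketch proof: use the extra derivative and boundary vanishing in $H^3_{\langle 3\rangle}$ to gain one additional power of $|k|^{-1}$ decay in $c_-/\alpha$, deduce $g\in H^1_{\dd t}(\Sigma)$ and $g(0)=0$ (the latter via Cauchy's theorem), invoke the continuous invertibility of $K|_{H^1_{\dd t}(\Sigma)}$ to recover the boundary values, and finish with estimates modelled on Proposition~\ref{firstwellposed}. Your version is in fact more explicit than the paper's sketch in treating the $F_t$-contribution to $\hat{\eta}$; one small caveat is that $\alpha(k)$ is \emph{not} bounded near $k=0$ (it behaves like $k^{-3/2}$), so the low-frequency control of the $F_t$-terms should come from the cancellation between the $\omega_+$ and $\omega_-$ pieces rather than from separate boundedness of $\alpha$ and $F_t$.
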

\begin{proof}[Sketch Proof]
First we note that for $\eta_0 \in H^3_{\langle 3\rangle}(\mathbf{R}^+)$ we have:
\[ \left| \frac{ c_-(k)}{\alpha (k)}\right| < \frac{ \epsilon}{|k|^3} \]
for $|k|$ sufficiently large. This follows from estimates similar to those in Proposition \ref{propmain}. In this case an application of Cauchy's theorem gives that $g(0)=0$, as required for the equation $Kf=g$ to have a solution in a classical sense. Also, for $\eta_0 \in H^3_{\langle 3\rangle}(\mathbf{R}^+)$, the map defined by:
\[ G: H^3_{\langle 3 \rangle}(\mathbf{R}^+) \rightarrow L^2_{\dd t}(\Sigma): \eta_0 \mapsto g, \]
is continuous. In addition, using estimates similar to those in the proof of Lemma \ref{integralop} we find $g'(t)\in L^2_{\dd t} (\Sigma)$, hence $g \in H^1_{\dd t}(\Sigma)$ and it follows from our previous discussion that the solution to the integral equation depends continuously on $g$ with respect to the $L^2_{\dd t}(\Sigma)$ topology. It follows from estimates similar to those in Proposition \ref{firstwellposed} that the solution map $S_t:H^3_{\langle 3\rangle}(\mathbf{R}^+)\rightarrow L^2_{\dd x}(\mathbf{R}^+)$, defined by $\eta_0 \mapsto \eta$, is continuous.
\end{proof}

\nocite{crighton1991flm}
\nocite{kato1995ptl}
\nocite{ablowitz2003cvi}
\nocite{whitham1974lan}
\nocite{rudin1993rac}
\bibliographystyle{plain}
\bibliography{../../../../Bibliography/ants_bib}

\end{document}